\newtheorem{theorem}{Theorem}[section]
\newtheorem{lemma}[theorem]{Lemma}
\newtheorem{rem}[theorem]{Remark}
\newtheorem{prop}[theorem]{Proposition}
\newtheorem{cor}[theorem]{Corollary}
\DeclareMathOperator{\sfl}{sf}
\DeclareMathOperator{\sgn}{sgn}
\title{Spectral flow and bifurcation for a class of strongly indefinite elliptic systems}
\author{Nils Waterstraat}
\begin{document}
\date{}
\maketitle

\footnotetext[1]{{\bf 2010 Mathematics Subject Classification: Primary 35J57; Secondary 58J30, 35J61}}

\begin{abstract}
\noindent We consider bifurcation of solutions from a given trivial branch for a class of strongly indefinite elliptic systems via the spectral flow. Our main results establish bifurcation invariants that can be obtained from the coefficients of the systems without using explicit solutions of their linearisations at the given branch. Our constructions are based on a comparison principle for the spectral flow and a generalisation of a bifurcation theorem due to Szulkin.  
\end{abstract}

\section{Introduction}
Let $\Omega$ be a domain in $\mathbb{R}^N$ for some $N\in\mathbb{N}$ which we assume to have a smooth boundary. Let $a,b,c:I\times\overline{\Omega}\rightarrow\mathbb{R}$ and $G:I\times\overline{\Omega}\times\mathbb{R}^2\rightarrow\mathbb{R}$ be $C^2$-functions, where $I:=[0,1]$ denotes -- here and throughout the paper -- the unit interval. We denote by $G'_u$ and $G'_v$ the partial derivatives of $G$ with respect to the components in $\mathbb{R}^2$, respectively, we assume that $G'_u(\lambda,x,0,0)=G'_v(\lambda,x,0,0)=0$ for all $(\lambda,x)\in I\times\Omega$ and we consider the systems of elliptic partial differential equations

\begin{equation}\label{equI}
\left\{
\begin{aligned}
-\Delta u&= b_\lambda(x)u+c_\lambda(x)v+G'_v(\lambda,x,u,v)&& \,\text{in}\,\,\Omega,\\
-\Delta v&= a_\lambda(x)u+b_\lambda(x)v+G'_u(\lambda,x,u,v)&& \,\text{in}\,\,\Omega,\\
u&=v=0&&\text{on}\,\,\partial\Omega
\end{aligned}
\right.
\end{equation}
depending on the parameter $\lambda\in I$. Clearly, under the mentioned assumptions the constant function $(u,v)\equiv0$ is a solution of \eqref{equI} for all values of $\lambda$ and the aim of this article is to investigate bifurcation from this trivial branch of solutions $I\times\{0\}$. Here, a bifurcation point of \eqref{equI} is an instant $\lambda^\ast\in I$ for which there is a sequence $\{(\lambda_n,u_n,v_n)\}_{n\in\mathbb{N}}$ such that $(u_n,v_n)\neq 0$ is a weak solution of \eqref{equI} for $\lambda_n$, $\lambda_n\rightarrow\lambda^\ast$ and $u_n,v_n\rightarrow0$ in the Sobolev space $H^1_0(\Omega,\mathbb{R})$ for $n\rightarrow\infty$. Our methods are based on a bifurcation theorem for critical points of families of functionals due to Fitzpatrick, Pejsachowicz and Recht \cite{SFLPejsachowicz}, which was recently improved by Pejsachowicz and the author in \cite{BifJac}. In order to explain this theorem briefly, let $f:I\times H\rightarrow\mathbb{R}$ be a family of $C^2$ functionals which are defined on a Hilbert space $H$ and such that $0\in H$ is a critical point of all $f_\lambda:=f(\lambda,\cdot):H\rightarrow\mathbb{R}$. A \textit{bifurcation point} of critical points for $f$ is an instant $\lambda^\ast\in I$ such that every neighbourhood of $(\lambda^\ast,0)$ in $I\times H$ contains elements $(\lambda,z)$ such that $z\neq 0$ is a critical point of $f_\lambda$. If we now represent the second derivatives  $D^2_0f_\lambda$ of $f$ at the critical point $0$ against the scalar product of $H$, then we obtain a path $L=\{L_\lambda\}_{\lambda\in I}$ of selfadjoint operators. It is readily seen from the Implicit Function Theorem that $L_{\lambda^\ast}$ is not invertible if $\lambda^\ast\in I$ is a bifurcation point of critical points of $f$.\\
The \textit{spectral flow} is an integer valued homotopy invariant for paths of selfadjoint Fredholm operators, which has been used in Global Analysis for about 40 years. We will say more about it in the next section, but here we just want to mention that a non-vanishing spectral flow implies that the path contains non-invertible operators. Its relevance for bifurcation of critical points for families of functionals was clarified in \cite{SFLPejsachowicz}: if the selfadjoint operators $L_\lambda$, which are induced by the Hessians of $f$ at $0$, are Fredholm, then a non-vanishing spectral flow is a sufficient condition for the existence of a bifurcation of critical points of $f$. Let us point out that if the operators $L_\lambda$ have finite Morse indices, then the spectral flow of $L$ is just the difference of the Morse indices of $L_0$ and $L_1$, and so the bifurcation theorem \cite{SFLPejsachowicz} is a classical assertion in variational bifurcation theory in this case. In contrast, it is often hard to compute the spectral flow of a given path of operators when the Morse indices are infinite (cf. e.g. \cite{AleSmaleIndef}).\\
The aim of this article is to show that for the indefinite elliptic systems \eqref{equI}, where the Morse indices of the corresponding operators $L_\lambda$ are indeed infinite, the spectral flow can be computed, or at least estimated, so that \cite{SFLPejsachowicz} can be used to derive bifurcation criteria. To our best knowledge, such easily computable bifurcation invariants that are induced by the spectral flow have not been obtained for partial differential equations before.\\
In the following section, we introduce a family of $C^2$-functionals $f:I\times E\rightarrow\mathbb{R}$ which is defined on the Sobolev space $E:=H^1_0(\Omega,\mathbb{R}^2)$ and which is such that the critical points of $f_\lambda:=f(\lambda,\cdot):E\rightarrow\mathbb{R}$ are precisely the weak solutions of \eqref{equI}. In particular, $0\in E$ is a critical point of each $f_\lambda$ and we can deduce the existence of a bifurcation from the zero-solution of \eqref{equI} by considering bifurcation of critical points from $0$ for the family of functionals $f$. We will state below conditions on the map $G$ which ensure that the Hessians $D^2_0f_\lambda$ of $f_\lambda$ at $0\in E$ exist, and that elements in the kernel of the representations $L_\lambda$ of $D^2_0f_\lambda$ on $E$ are the solutions of the linearised equation

\begin{equation}\label{equlinI}
\left\{
\begin{aligned}
-\Delta u&= b_\lambda(x)u+c_\lambda(x)v&& \,\text{in}\,\,\Omega,\\
-\Delta v&= a_\lambda(x)u+b_\lambda(x)v&& \,\text{in}\,\,\Omega,\\
u&=v=0&&\text{on}\,\,\partial\Omega.
\end{aligned}
\right.
\end{equation}
Since the operators $L_\lambda$ are readily seen to be Fredholm in this case, we can use the abstract bifurcation theorem \cite{SFLPejsachowicz}, and consequently, we will be concerned with the spectral flow of the corresponding path $L=\{L_\lambda\}_{\lambda\in I}$. In Theorem \ref{thm:comp}, which we consider as our main result of this paper, we estimate the spectral flow in terms of the coefficients of \eqref{equlinI} at $\lambda=0$ and $\lambda=1$, which is enough to conclude that it does not vanish and so implies the existence of a bifurcation of solutions for the nonlinear equations \eqref{equI}.\\
Another objective of this paper is to consider the special case in which the maps $a,b$ and $c$ do not depend on $x\in\Omega$, i.e.

\begin{equation}\label{equII}
\left\{
\begin{aligned}
-\Delta u&= b_\lambda u+c_\lambda v+G'_v(\lambda,x,u,v)&& \,\text{in}\,\,\Omega,\\
-\Delta v&= a_\lambda u+b_\lambda v+G'_u(\lambda,x,u,v)&& \,\text{in}\,\,\Omega,\\
u&=v=0&&\text{on}\,\,\partial\Omega.
\end{aligned}
\right.
\end{equation}
For these equations, we compute the spectral flow of the corresponding path of operators $L$ exactly in terms of an integral index that can be constructed from the coefficients of the linearised equations

\begin{equation}\label{equlinII}
\left\{
\begin{aligned}
-\Delta u&= b_\lambda u+c_\lambda v&& \,\text{in}\,\,\Omega,\\
-\Delta v&= a_\lambda u+b_\lambda v&& \,\text{in}\,\,\Omega,\\
u&=v=0&&\text{on}\,\,\partial\Omega
\end{aligned}
\right.
\end{equation}
for $\lambda=0$ and $\lambda=1$. The idea of this index goes back to Li and Liu \cite{LiLiu}, who used a similar construction in their study of existence of periodic solutions of asymptotically quadratic Hamiltonian systems. Their index was later applied in bifurcation theory for periodic solutions of Hamiltonian systems, e.g., by Szulkin in \cite{Szulkin}, and by Fitzpatrick, Pejsachowicz and Recht in \cite{SFLPejsachowiczII} who in particular used it to compute the spectral flow for autonomous Hamiltonian systems. It has some interest in its own that we compute in our Theorem \ref{mainthmII} the spectral flow for the equations \eqref{equlinII} by an index that is very much reminiscent of Li and Liu's index from \cite{LiLiu}. The adaption of Li and Liu's index for Hamiltonian systems to the elliptic systems \eqref{equlinII} follows closely Szulkin's work \cite[\S 5]{Szulkin} (cf. also \cite[\S 9]{Kryszewski}), who investigated the bifurcation problem for \eqref{equII} in the special case that $a,b,c$ and $G$ depend linearly on $\lambda$, i.e.

\begin{equation}\label{equSzulkin}
\left\{
\begin{aligned}
-\Delta u&= \lambda(b u+c v+G'_v(x,u,v))&& \,\text{in}\,\,\Omega,\\
-\Delta v&= \lambda(a u+b v+G'_u(x,u,v))&& \,\text{in}\,\,\Omega,\\
u&=v=0&&\text{on}\,\,\partial\Omega,
\end{aligned}
\right.
\end{equation}
by using infinite dimensional Morse theory for strongly indefinite functionals. We reobtain Szulkin's results in Corollary \ref{Corollary-Szulkin} as a consequence of our Theorem \ref{mainthmII}, and we will also assess our main Theorem \ref{thm:comp} for the equations \eqref{equSzulkin} below.\\
The paper is organised as follows. In the next section we introduce the family of functionals $f:I\times E\rightarrow\mathbb{R}$ which have as critical points the solutions of the nonlinear equations \eqref{equI}. Moreover, we recall the definition of the spectral flow, its main properties and its use in variational bifurcation theory. This leads us directly to a first theorem on bifurcation for \eqref{equI}. In the subsequent Section 3, we consider the equations \eqref{equII}. We construct an integral invariant for the coefficients of the systems \eqref{equlinII} and use our theorem from the previous section to show that the non-vanishing of this number causes bifurcation of the equations \eqref{equII}. In Section 4 we consider again the general systems \eqref{equI} and we use a comparison principle for the spectral flow to find criteria on the coefficients of \eqref{equlinI} to obtain bifurcation points for \eqref{equI}. This involves in particular the index that we introduced for the systems \eqref{equlinII} in the previous section. In the fifth section, we consider the case $N=1$, i.e. when the equations \eqref{equI} are ordinary differential equations. Since in this case we can estimate the dimension of the solution spaces of \eqref{equlinI} from above, we can use a result from \cite{BifJac} to obtain not only the existence of bifurcation points but also an estimate on their number. The paper ends with a short appendix in which we elaborate some folklore results on families of compact operators and projections.


\section{Spectral flow and bifurcation for \eqref{equI}}\label{sect:setting}
Let $\Omega\subset\mathbb{R}^N$ be a bounded domain with smooth boundary $\partial\Omega$. In what follows we assume that: 

\begin{itemize}
\item[(A1)] $a,b,c:I\times\overline{\Omega}\rightarrow\mathbb{R}$ and $G:I\times\overline{\Omega}\times\mathbb{R}^2\rightarrow\mathbb{R}$ are $C^2$-functions.
\item[(A2)] $G'_u$ and $G'_v$ are bounded and 

\[|G'_u(\lambda,x,u,v)|+|G'_v(\lambda,x,u,v)|=o(|u|+|v|)\]
as $|u|+|v|\rightarrow 0$ uniformly in $(\lambda,x)\in I\times\overline{\Omega}$.
\item[(A3)] $D^2G(\lambda,x,0,0)=0$ for all $(\lambda,x)\in I\times\Omega$, where $D^2G(\lambda,x,u,v)$ denotes the Hessian matrix of $G(\lambda,x,\cdot,\cdot):\mathbb{R}^2\rightarrow\mathbb{R}$ at $(u,v)\in\mathbb{R}^2$. 
\end{itemize}
Moreover, if $N>1$, we shall also assume that

\begin{itemize}
\item[(A4)] there exists $C\geq 0$ such that

\begin{align*}
\|D^2G(\lambda,x,u,v)\|\leq C(1+|u|+|v|)^{p-1},\quad (\lambda,x)\in I\times\overline{\Omega},\,\,u,v\in\mathbb{R},
\end{align*}
where $1\leq p<\frac{N+2}{N-2}$ if $N>2$ and $1\leq p<\infty$ if $N=2$. 
\end{itemize}
Note that the constant function $(u,v)\equiv 0$ is a solution of \eqref{equI} for all $\lambda\in I$ by (A2), and the aim of this article is to study bifurcation of (weak) solutions of \eqref{equI} from this trivial branch.\\
Let now $H^1_0(\Omega,\mathbb{R})$ be the usual Sobolev space with scalar product 

\[\langle u_1,u_2\rangle_{H^1_0(\Omega,\mathbb{R})}=\int_\Omega{\langle\nabla u_1,\nabla u_2\rangle\,dx}\]
and we set $E:=H^1_0(\Omega,\mathbb{R})\times H^1_0(\Omega,\mathbb{R})$ which is a Hilbert space with respect to 

\[\langle (u_1,v_1),(u_2,v_2)\rangle_E=\langle u_1,u_2\rangle_{H^1_0(\Omega,\mathbb{R})}+\langle v_1,v_2\rangle_{H^1_0(\Omega,\mathbb{R})}.\]
We consider the map $f:I\times E\rightarrow\mathbb{R}$ given by

\begin{align}\label{functional}
f_\lambda(z)=\int_\Omega{\langle\nabla u,\nabla v\rangle\,dx}-\frac{1}{2}\int_\Omega{a_\lambda(x)u^2+2b_\lambda(x)uv+c_\lambda(x)v^2\,dx}-\int_\Omega{G(\lambda,x,u,v)\,dx},
\end{align}
where $z=(u,v)\in E$, and we note that $f$ is $C^2$ under our assumptions (A1), (A2) and (A4) (cf. \cite{Kryszewski}). The critical points of $f_\lambda$ are precisely the weak solutions of equation \eqref{equI}, and in particular $0\in E$ is a critical point of all functionals $f_\lambda$. We say that $\lambda^\ast\in I$ is a \textit{bifurcation point} of weak solutions for the equations \eqref{equI}, if every neighbourhood of $(\lambda^\ast,0)\in I\times E$ contains some $(\lambda,z)\neq (\lambda,0)$, where $z$ is a weak solution of \eqref{equI}; or equivalently, a critical point of $f_\lambda$. Consequently, in order to investigate bifurcation of \eqref{equI} from the trivial branch of solutions we need to study bifurcation of critical points of \eqref{functional} from the branch $I\times\{0\}\subset I\times E$. For this we consider the Hessians of $f_\lambda$ at $0\in E$, which are given by

\begin{align}\label{Hessian}
\begin{split}
D^2_0f_\lambda(z,\overline{z})&=\int_\Omega{\langle\nabla u,\nabla \overline{v}\rangle\,dx}+\int_\Omega{\langle\nabla \overline{u},\nabla v\rangle\,dx}\\
&-\int_\Omega{a_\lambda(x)u\overline{u}+b_\lambda(x)(\overline{u}v+u\overline{v})+c_\lambda(x)v\overline{v}\,dx} \quad z=(u,v),\,\,\overline{z}=(\overline{u},\overline{v}),
\end{split}
\end{align} 
where we use Assumption (A3). Let us denote by $L_\lambda$ the Riesz representations of $D^2_0f_\lambda$, i.e. the bounded selfadjoint operators on $E$ defined by

\begin{align}\label{Riesz}
\langle L_\lambda z,\overline{z}\rangle_E=D^2_0f(z,\overline{z}),\quad z,\overline{z}\in E.
\end{align} 
Then $L_\lambda=T+K_\lambda$, where $T:E\rightarrow E$ is the selfadjoint invertible operator given by 

\begin{align}\label{T}
Tz=T(u,v)=(v,u),\quad z=(u,v)\in E.
\end{align}
Moreover, the operator $K_\lambda$, which is uniquely determined by

\begin{align}\label{compactness}
\langle K_\lambda z,\overline{z}\rangle_E=-\int_\Omega{a_\lambda(x)u\overline{u}+b_\lambda(x)(\overline{u}v+u\overline{v})+c_\lambda(x)v\overline{v}\,dx},\quad z=(u,v), \overline{z}=(\overline{u},\overline{v}),
\end{align}
is compact since the right hand side in \eqref{compactness} extends to a bounded quadratic form on $L^2(\Omega,\mathbb{R}^2)$ and $E$ is compactly embedded in this space (cf. e.g. \cite[Lemma 3.1]{CalcVar}). Consequently, $L=\{L_\lambda\}_{\lambda\in I}$ is a path of selfadjoint Fredholm operators to which we can assign the \textit{spectral flow}.\\
Let now $H$ be an arbitrary separable real Hilbert space. The spectral flow is an integer-valued index for paths $L=\{L_\lambda\}_{\lambda\in I}$ of selfadjoint Fredholm operators $L_\lambda$ on $H$ which we denote by $\sfl(L,I)$. It was introduced by Atiyah, Patodi and Singer in the Seventies in \cite{AtiyahPatodi} and since then it has reappeared in many different areas of geometry and analysis (we refer to \cite{BifJac} for a detailed list of references). Here we introduce it along the lines of \cite{SFLPejsachowicz}, and discuss an application to bifurcation of critical points of families of functionals from the same reference and \cite{BifJac}. In what follows, we call a path of selfadjoint Fredholm operators \textit{admissible} if its endpoints are invertible. Moreover, we denote by $\Phi_S(H)$ the space of all bounded selfadjoint Fredholm operators equipped with the norm topology. In order to shorten the presentation, we use an axiomatic description from \cite{SFLUniqueness}. Accordingly, the spectral flow is the unique map which assigns to each admissible path $L=\{L_\lambda\}_{\lambda\in I}$ in $\Phi_S(H)$ an integer such that:   

\begin{itemize}
	\item(Normalisation) If $L_\lambda$ is invertible for all $\lambda\in I$, then 
	
	\[\sfl(L,I)=0.\]
	\item(Additivity) If $H=H_1\oplus H_2$ and $L_\lambda(H_i)\subset H_i$ for all $\lambda\in I$ and $i=1,2$, then 
	\[\sfl(L,I)=\sfl(L\mid_{H_1},I)+\sfl(L\mid_{H_2},I).\]
	\item(Homotopy) If $\{h_{(\lambda,s)}\}_{(\lambda,s)\in I\times I}$ is a family in $\Phi_S(H)$ such that $h(0,s)$ and $h(1,s)$ are invertible for all $s\in I$, then 
	
	\[\sfl(h(\cdot,0),I)=\sfl(h(\cdot,1),I).\]
	\item(Dimension) If $\dim H<\infty$ then
	\[\sfl(L,I)=\mu_{Morse}(L_0)-\mu_{Morse}(L_1),\]
	where $\mu_{Morse}$ denotes the Morse index, i.e. the number of negative eigenvalues counted with multiplicities.    
\end{itemize}
Clearly, by reparameterising, the spectral flow can also be defined for paths which are parametrised by a general compact interval $[\lambda_0,\lambda_1]$. If $L=\{L_\lambda\}_{\lambda\in [\lambda_0,\lambda_1]}$ is an admissible path of selfadjoint Fredholm operators, then we denote its spectral flow by $\sfl(L,[\lambda_0,\lambda_1])$, and we note the following property for later reference:

\begin{itemize}
\item(Concatenation) If $\lambda_0<\lambda_1<\lambda_2$ and $L_{\lambda_1}$ is invertible, then 

\[\sfl(L,[\lambda_0,\lambda_2])=\sfl(L,[\lambda_0,\lambda_1])+\sfl(L,[\lambda_1,\lambda_2]).\]
\end{itemize}
Let us now consider continuous maps $f:I\times H\rightarrow\mathbb{R}$ of $C^2$ functionals $f_\lambda:=f(\lambda,\cdot):H\rightarrow\mathbb{R}$ such that the derivatives $Df_\lambda$ and $D^2f_\lambda$ depend continuously on $\lambda$, and let us assume that $D_0f_\lambda=0$, i.e. $0\in H$ is a critical point of $f_\lambda$ for all $\lambda\in I$. Recall that a \textit{bifurcation point} of critical points of $f$ is an instant $\lambda^\ast\in I$ such that every neighbourhood of $(\lambda^\ast,0)$ in $I\times H$ contains elements $(\lambda,u)$ where $u\neq 0$ is a critical point of $f_\lambda$. The main theorems in \cite{SFLPejsachowicz} and \cite{BifJac} state that if the Riesz representations $L_\lambda$ of $D^2_0f_\lambda$ are Fredholm for all $\lambda$ and $L_0,L_1$ are invertible, then there is a bifurcation of critical points of $f$ if $\sfl(L,I)\neq 0$.\\ 
Let us now come back to the differential equations \eqref{equI} and the functionals \eqref{functional} on the Hilbert space $E$ for which the corresponding operators $L_\lambda$ are the Riesz representations of \eqref{Hessian}. By standard regularity theory, it follows that the kernels of $L_\lambda$ consist of solutions of the linearised equations \eqref{equlinI}. Since $L_\lambda$ is Fredholm and selfadjoint, its Fredholm index vanishes, and so we conclude that $L_\lambda$ is non-invertible if and only if \eqref{equlinI} has a non-trivial solution. Let us mention in passing that it is readily seen from the implicit function theorem that $L_{\lambda^\ast}$ is not invertible if $\lambda^\ast$ is a bifurcation point, which provides information about the location of possible bifurcation points. Finally, we can summarise the previous discussion as follows.

\begin{theorem}\label{mainthmI}
Let $\Omega\subset\mathbb{R}^N$ be a bounded domain having a smooth boundary and let the functions $a,b,c$ and $G$ in \eqref{equI} satisfy (A1)-(A4). If the linear systems \eqref{equlinI} have no non-trivial solution for $\lambda=0,1$ and $\sfl(L,I)\neq 0$, then there is a bifurcation point $\lambda^\ast\in(0,1)$ for the family of equations \eqref{equI}.
\end{theorem}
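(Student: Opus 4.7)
The plan is to verify the hypotheses of the abstract variational bifurcation theorem of Fitzpatrick, Pejsachowicz and Recht \cite{SFLPejsachowicz}, as improved in \cite{BifJac}, for the family of functionals $f$ defined in \eqref{functional}, and then transfer the conclusion back to the PDE system \eqref{equI}. Most ingredients are already laid out in the discussion preceding the theorem.

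First, under (A1)-(A4), the functional $f:I\times E\rightarrow\mathbb{R}$ is $C^2$ in both variables, the trivial element $0\in E$ is a critical point of every $f_\lambda$ by (A2), and by (A3) the Hessian $D^2_0 f_\lambda$ has the explicit form \eqref{Hessian}. Its Riesz representation then splits as $L_\lambda=T+K_\lambda$ with $T$ selfadjoint invertible as in \eqref{T} and $K_\lambda$ compact as in \eqref{compactness}, and the dependence of $L_\lambda$ on $\lambda$ is norm-continuous. Consequently, $L=\{L_\lambda\}_{\lambda\in I}$ is a continuous path of selfadjoint Fredholm operators of index zero, so its spectral flow $\sfl(L,I)$ is well defined.

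Next, I would identify $\ker L_\lambda$ with the weak solution space of the linearised system \eqref{equlinI}. By the definition \eqref{Riesz} of the Riesz representation and the explicit formula \eqref{Hessian} for the Hessian, $L_\lambda z=0$ is equivalent to the condition $D^2_0 f_\lambda(z,\overline{z})=0$ for every test pair $\overline{z}\in E$, and this in turn is precisely the weak formulation of \eqref{equlinI}. Since $L_\lambda$ has Fredholm index zero, injectivity is equivalent to invertibility; hence the hypothesis that \eqref{equlinI} admits no non-trivial solution for $\lambda=0,1$ forces $L_0$ and $L_1$ to be invertible, and the path $L$ is admissible in the sense introduced above.

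With admissibility established and $\sfl(L,I)\neq 0$ by hypothesis, the abstract bifurcation theorem cited just above the statement produces a bifurcation point $\lambda^\ast\in I$ of critical points of $f$. Because the critical points of $f_\lambda$ coincide with the weak solutions of \eqref{equI}, this $\lambda^\ast$ is also a bifurcation point for \eqref{equI}. Invertibility of $L_0$ and $L_1$ excludes the endpoints through the implicit function theorem argument recalled earlier in the section, so $\lambda^\ast\in(0,1)$. The argument is essentially a compilation of material already developed in this section; the one step requiring slight care is the identification of $\ker L_\lambda$ with weak solutions of \eqref{equlinI}, which is, however, immediate once the Hessian \eqref{Hessian} is written out and tested against arbitrary $\overline{z}\in E$.
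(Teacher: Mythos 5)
Your proposal is correct and follows essentially the same route the paper takes: the theorem is presented there as a summary of the preceding discussion, which establishes exactly the ingredients you cite ($C^2$-regularity of $f$, the splitting $L_\lambda=T+K_\lambda$ into invertible-plus-compact, Fredholmness of index zero, identification of $\ker L_\lambda$ with solutions of \eqref{equlinI}, admissibility of $L$, and then the abstract bifurcation theorem of \cite{SFLPejsachowicz}, \cite{BifJac} together with the implicit function theorem to force $\lambda^\ast$ into the open interval). The only cosmetic difference is that you derive the kernel identification directly from the weak formulation tested against arbitrary $\overline{z}\in E$, whereas the paper invokes ``standard regularity theory''; both are adequate and describe the same space.
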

\noindent
Of course, the difficult point when applying Theorem \ref{mainthmI} to the equations \eqref{equI} is to compute $\sfl(L,I)$, or at least to find conditions that ensure its non-triviality. In the remainder of this article we will be concerned with this problem. At first, we want to review a method for computing spectral flows that has been applied several times in the past in other settings (e.g. for Hamiltonian systems in \cite{SFLPejsachowiczII} and for partial differential equations in \cite{AleIchDomain}, \cite{AleBall}, \cite{ProcHan} and \cite{AleSmaleIndef}).\\
Let us assume for the remainder of this section that the path $\{L_\lambda\}_{\lambda\in I}$ is $C^1$ in $\mathcal{L}(E)$. We call an instant $\lambda_0\in I$ a crossing if $L_{\lambda_0}$ is non-invertible, which is, as we have already observed, the case if and only if \eqref{equlinI} has a non-trivial solution. Given a crossing $\lambda_0$, we obtain a quadratic form on $\ker L_{\lambda_0}$ by

\[\Gamma(L,\lambda_0):\ker L_{\lambda_0}\rightarrow\mathbb{R},\quad \Gamma(L,\lambda_0)[u]=\langle\frac{d}{d\lambda}\mid_{\lambda=\lambda_0} L_\lambda u,u\rangle_H,\]
and we say that a crossing is \textit{regular} if $\Gamma(L,\lambda_0)$ is non-degenerate. Let now $\lambda_0$ be a regular crossing of $L$. One can show that regular crossings are isolated and hence there is $\varepsilon>0$ such that $L_\lambda$ is invertible for all $\lambda$ in the punctured neighbourhood $[\lambda_0-\varepsilon,\lambda_0+\varepsilon]\setminus\{\lambda_0\}$. We obtain from the previously mentioned bifurcation theorem \cite{SFLPejsachowicz} that there is a bifurcation point for $f$ in $[\lambda_0-\varepsilon,\lambda_0+\varepsilon]$ if $\sfl(L,[\lambda_0-\varepsilon,\lambda_0+\varepsilon])\neq 0$. As $L_\lambda$ is not invertible at bifurcation points by the implicit function theorem, it follows in this case that the obtained bifurcation point is $\lambda_0$. By a theorem due to Robbin and Salamon \cite{Robbin-Salamon} (cf. also \cite{SFLPejsachowicz} and \cite{Homoclinics}), $\sfl(L,[\lambda_0-\varepsilon,\lambda_0+\varepsilon])$ is given by the signature of the quadratic form $\Gamma(L,\lambda_0)$ on $\ker L_{\lambda_0}$. From \eqref{Hessian} and \eqref{Riesz} we obtain that

\[\Gamma(L,\lambda_0)[z]=-\int_\Omega{\dot a_{\lambda_0}(x)u^2+2\dot b_{\lambda_0}(x)uv+\dot c_{\lambda_0}(x)v^2\,dx},\quad z=(u,v)\in\ker L_{\lambda_0},\]    
where $\dot{}$ denotes the derivative with respect to the parameter $\lambda$. If we use that a quadratic form is non-degenerate and of non-vanishing signature if it is positive or negative definite, we obtain from Sylvester's criterion the following result.

\begin{theorem}
Let $\Omega\subset\mathbb{R}^N$ be a bounded domain having a smooth boundary and let the functions $a,b,c$ and $G$ in \eqref{equI} satisfy (A1)-(A4). If the linear systems \eqref{equlinI} have a non-trivial solution for $\lambda=\lambda_0\in(0,1)$, $\dot a_{\lambda_0}(x)\neq 0$ for all $x\in\Omega$ and 

\begin{align}\label{posdef}
	\dot a_{\lambda_0}(x)\dot c_{\lambda_0}(x)-\dot b^2_{\lambda_0}(x)>0,\quad x\in\Omega,
	\end{align}
	 then $\lambda_0$ is a bifurcation point for \eqref{equI}.
\end{theorem}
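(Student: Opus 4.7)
The strategy is to apply the regular-crossing analysis just recalled to the instant $\lambda_0$. The hypothesis that \eqref{equlinI} has a nontrivial solution at $\lambda_0$ is precisely the statement that $\ker L_{\lambda_0}\neq\{0\}$, so $\lambda_0$ is a crossing, and the associated crossing form has already been computed as $\Gamma(L,\lambda_0)[z]=-\int_\Omega(\dot a_{\lambda_0}u^2+2\dot b_{\lambda_0}uv+\dot c_{\lambda_0}v^2)\,dx$ on $\ker L_{\lambda_0}$. My plan is to show that $\Gamma(L,\lambda_0)$ is definite, hence a regular crossing with nonzero signature; the conclusion will then follow by combining the Robbin--Salamon signature formula, the bifurcation theorem of \cite{SFLPejsachowicz}, and the implicit function theorem, exactly as indicated in the paragraph preceding the statement.

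The key step is pointwise definiteness of the integrand. Fix $x\in\Omega$. Hypothesis \eqref{posdef} says the symmetric matrix $M(x):=\begin{pmatrix}\dot a_{\lambda_0}(x)&\dot b_{\lambda_0}(x)\\\dot b_{\lambda_0}(x)&\dot c_{\lambda_0}(x)\end{pmatrix}$ has positive determinant, and $\dot a_{\lambda_0}(x)\neq 0$; Sylvester's criterion therefore makes $M(x)$ either positive or negative definite according to the sign of $\dot a_{\lambda_0}(x)$. Since $\Omega$ is connected and $\dot a_{\lambda_0}$ is continuous and nowhere vanishing on it, this sign is constant, and all matrices $M(x)$ have the same definiteness. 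Hence the integrand vanishes only where $(u(x),v(x))=0$, so for every $0\neq z=(u,v)\in E$ the integral is nonzero. In particular $\Gamma(L,\lambda_0)$ is definite on the nontrivial space $\ker L_{\lambda_0}$, and its signature equals $\pm\dim\ker L_{\lambda_0}\neq 0$.

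With regularity and nonvanishing signature at $\lambda_0$ established, the remainder is boilerplate. Since regular crossings are isolated one can choose $\varepsilon>0$ such that $L_\lambda$ is invertible on $[\lambda_0-\varepsilon,\lambda_0+\varepsilon]\setminus\{\lambda_0\}$. Then Robbin--Salamon gives $\sfl(L,[\lambda_0-\varepsilon,\lambda_0+\varepsilon])=\sgn\Gamma(L,\lambda_0)\neq 0$, so the abstract bifurcation theorem \cite{SFLPejsachowicz} produces a bifurcation point inside this interval. Because $L_\lambda$ must be singular at any bifurcation point (by the implicit function theorem), this bifurcation point is forced to be $\lambda_0$ itself.

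I do not foresee any serious obstacle, as every ingredient is already assembled in the section above; the only point that needs any care is the constant-sign argument for $\dot a_{\lambda_0}$, which uses the standard convention that a domain in $\mathbb{R}^N$ is connected.
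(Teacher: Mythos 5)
Your argument is correct and is precisely the paper's intended proof: the paper's entire justification after the crossing-form display is the single observation that a definite form is non-degenerate and has non-vanishing signature, and that Sylvester's criterion (from $\dot a_{\lambda_0}\neq 0$ and $\dot a_{\lambda_0}\dot c_{\lambda_0}-\dot b_{\lambda_0}^2>0$) gives pointwise definiteness of the coefficient matrix, after which Robbin--Salamon and the abstract bifurcation theorem localise the bifurcation at $\lambda_0$. The one point you flag as needing care -- that $\dot a_{\lambda_0}$ has constant sign because $\Omega$ is connected -- is indeed part of the convention that a domain is connected, and is implicitly used but not mentioned in the paper; your making it explicit is a small improvement rather than a deviation.
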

\noindent
Equation \eqref{posdef} is a convenient criterion for the existence of bifurcation points, however, we want to point out a drawback of this approach: the non-triviality of $\ker L_{\lambda_0}$ and so the existence of non-trivial solutions of \eqref{equlinI} needs to be known. The aim of the following sections is to present approaches to the bifurcation problem of \eqref{equI} which only uses information about the coefficients of \eqref{equlinI} and not about possible solutions for parameter values $\lambda\in(0,1)$.


\section{Index and bifurcation for \eqref{equII}}\label{sect:index}
In this section we consider the equations \eqref{equII}, where we again assume throughout (A1)-(A4). Our first aim is to construct an invariant for the equations \eqref{equlinII}, which we will use below to compute the spectral flow of the associated path $L=\{L_\lambda\}_{\lambda\in I}$ introduced in \eqref{Riesz} in order to obtain the existence of bifurcation from Theorem \ref{mainthmI}. The following construction is based on Li and Liu's work \cite{LiLiu} for Hamiltonian systems, which was adapted to the equations \eqref{equSzulkin} by Szulkin in \cite{Szulkin}.\\
Let $\{e_k\}_{k\in\mathbb{N}}$ be an orthonormal basis of $H^1_0(\Omega,\mathbb{R})$ such that $-\Delta e_k=\lambda_k e_k$, and let us recall that the eigenvalues $\lambda_k$ are all positive and $\lambda_k\rightarrow\infty$ for $k\rightarrow\infty$. Now $\{\frac{1}{\sqrt{2}}(e_k,-e_k),\frac{1}{\sqrt{2}} (e_k,e_k)\}_{k\in\mathbb{N}}$ is an orthonormal basis of $E$ and we get an orthonormal decomposition 

\[E=H^1_0(\Omega,\mathbb{R})\oplus H^1_0(\Omega,\mathbb{R})=\bigoplus_{k\in\mathbb{N}} E_k,\]
where $E_k$ is the two dimensional space generated by $(e_k,-e_k)$ and $(e_k,e_k)$. Since $T(u,v)=(v,u)$ for all $(u,v)\in E$ (cf. \eqref{T}), we see that $T(E_k)\subset E_k$. By the following lemma, also the operators $K_\lambda$ in \eqref{compactness} leave the spaces $E_k$ invariant.

\begin{lemma}\label{invariant}
Let $P_k$ and $P_l$ denote the orthogonal projections in $E$ onto $E_k$ and $E_l$, respectively. If $k\neq l$, then

\[P_kK_\lambda P_l=0,\quad \lambda\in I.\]
\end{lemma}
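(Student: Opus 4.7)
The plan is to establish the lemma by showing directly that $\langle K_\lambda z,\overline{z}\rangle_E=0$ whenever $z\in E_l$ and $\overline{z}\in E_k$ with $k\neq l$; since both subspaces are finite-dimensional, this immediately yields $P_k K_\lambda P_l=0$. The decisive point to exploit is the hypothesis of this section that, in \eqref{equII}, the coefficients $a_\lambda,b_\lambda,c_\lambda$ are independent of $x$, and so can be pulled out of each integral in \eqref{compactness}.

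First I would parametrise elements of the two subspaces using the given basis: writing
\[z=\tfrac{1}{\sqrt 2}\alpha(e_l,-e_l)+\tfrac{1}{\sqrt 2}\beta(e_l,e_l),\qquad \overline{z}=\tfrac{1}{\sqrt 2}\gamma(e_k,-e_k)+\tfrac{1}{\sqrt 2}\delta(e_k,e_k),\]
the components $u,v$ are scalar multiples of $e_l$ and $\overline{u},\overline{v}$ are scalar multiples of $e_k$. Substituting into \eqref{compactness}, each of the four products $u\overline{u}$, $u\overline{v}$, $\overline{u}v$, $v\overline{v}$ becomes a constant (depending on $\alpha,\beta,\gamma,\delta$) times $e_l(x)e_k(x)$. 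Since $a_\lambda,b_\lambda,c_\lambda$ do not depend on $x$, the entire right-hand side of \eqref{compactness} collapses to a scalar multiple of $\int_\Omega e_l(x)\,e_k(x)\,dx$.

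It then remains to observe that this integral vanishes for $k\neq l$, which is the standard $L^2$-orthogonality of Dirichlet eigenfunctions of $-\Delta$. Indeed, by Green's identity together with the eigenvalue equation and the $H^1_0$-orthonormality of the basis,
\[\lambda_k\int_\Omega e_k e_l\,dx=\int_\Omega (-\Delta e_k)\,e_l\,dx=\int_\Omega\langle\nabla e_k,\nabla e_l\rangle\,dx=\langle e_k,e_l\rangle_{H^1_0(\Omega,\mathbb{R})}=0,\]
and since $\lambda_k>0$ we conclude $\int_\Omega e_k e_l\,dx=0$. Assembling the pieces gives $\langle K_\lambda z,\overline{z}\rangle_E=0$, completing the proof.

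There is no serious obstacle in this argument; it is essentially an orthogonality calculation whose crucial input is the $x$-independence of the coefficients in \eqref{equII}. This explains why the lemma is formulated in the present section and would fail, in general, for the coefficients $a_\lambda(x),b_\lambda(x),c_\lambda(x)$ of the more general systems \eqref{equI} treated in Section \ref{sect:setting}.
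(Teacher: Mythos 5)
Your proof is correct and follows essentially the same route as the paper: exploit the $x$-independence of $a_\lambda,b_\lambda,c_\lambda$ to reduce $\langle K_\lambda z,\overline z\rangle_E$ for $z\in E_l$, $\overline z\in E_k$ to a scalar multiple of $\int_\Omega e_k e_l\,dx$, and then show that integral vanishes via Green's identity and the $H^1_0$-orthonormality of the $e_k$. The only cosmetic difference is that you spell out the parametrisation of $E_k$ and $E_l$ explicitly, whereas the paper works with $P_kz,P_l\overline z$ for general $z,\overline z$ and absorbs everything into a constant $m$; both are the same argument.
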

  
\begin{proof}
If $z,\overline{z}\in E$, then $P_kz$ and $P_l\overline{z}$ are linear combinations of $(e_k,e_k), (-e_k,e_k)$ and $(e_l,e_l), (-e_l,e_l)$, respectively. Since the coefficients $a,b,c$ do not depend on $x\in\Omega$, it follows from \eqref{compactness} that 

\[\langle P_kK_\lambda P_lz,\overline{z}\rangle_E=\langle K_\lambda P_lz,P_k\overline{z}\rangle_E=m\int_{\Omega}{e_ke_l\,dx}\]
for some number $m\in\mathbb{R}$. However,
\[\int_\Omega{e_ke_l\,dx}=-\frac{1}{\lambda_k}\int_\Omega{(\Delta e_k)e_l\,dx}=\frac{1}{\lambda_k}\langle e_k,e_l\rangle_{H^1_0(\Omega,\mathbb{R})}=0\]
by Green's formula and so $P_kK_\lambda P_l=0$ for $k\neq l$.
\end{proof}
\noindent
We now define
\[L^k_\lambda:=P_kL_\lambda P_k=P_k(T+K_\lambda)P_k\mid_{E_k}=T+K_\lambda\mid_{E_k}:E_k\rightarrow E_k,\quad k\in\mathbb{N},\]
where $P_k:E\rightarrow E$ denotes the orthogonal projection onto $E_k$. If we set 

\[z=(u,v)=\frac{\alpha}{\sqrt{2}}(e_k,-e_k)+\frac{\beta}{\sqrt{2}}(e_k,e_k),\quad \alpha,\beta\in\mathbb{R}\]
then

\begin{align*}
D^2_0f_\lambda(z,z)=(\beta^2-\alpha^2)-\frac{1}{2\lambda_k}((a_\lambda-2b_\lambda+c_\lambda)\alpha^2+2(a_\lambda-c_\lambda)\alpha\beta+(a_\lambda+2b_\lambda+c_\lambda)\beta^2),
\end{align*} 
where we use that $\int_\Omega{e^2_k\,dx}=\frac{1}{\lambda_k}$, $k\in\mathbb{N}$. We obtain

\begin{align}\label{Lmatrix}
L^k_\lambda=\begin{pmatrix}
-1&0\\
0&1
\end{pmatrix}
-\frac{1}{2\lambda_k}\begin{pmatrix}
a_\lambda-2b_\lambda+c_\lambda& a_\lambda-c_\lambda\\
a_\lambda-c_\lambda& a_\lambda+2b_\lambda+c_\lambda
\end{pmatrix},\quad\lambda\in I,
\end{align}
with respect to the orthonormal basis $\{\frac{1}{\sqrt{2}}(e_k,-e_k),\frac{1}{\sqrt{2}}(e_k,e_k)\}$ of $E_k$. In particular, since $\lambda_k\rightarrow\infty$ as $k\rightarrow\infty$, there exists $k_0\in\mathbb{N}$ such that $L^k_\lambda$ is an isomorphism and

\[\sgn(L^k_\lambda)=\mu_{Morse}(-L^k_\lambda)-\mu_{Morse}(L^k_\lambda)=0,\quad\text{for all}\,\, k\geq k_0\,\, \text{and all}\,\,\lambda\in I.\]
Hence we can define for all $\lambda\in I$ an \textit{index} of the coefficient matrix

\[A_\lambda:=\begin{pmatrix}
a_\lambda&b_\lambda\\
b_\lambda&c_\lambda
\end{pmatrix}\]
of \eqref{equlinII} by

\[i(A_\lambda)=\frac{1}{2}\sum^{\infty}_{k=1}{\sgn(L^k_\lambda)}.\]
Note that if $L_\lambda$ is invertible, then $L^k_\lambda$ is invertible for all $k\in\mathbb{N}$ and so $\sgn(L^k_\lambda)$ is either $-2$, $0$ or $2$. Hence $i(A_\lambda)$ is an integer if $L_\lambda$ is invertible, whereas it is only a half-integer in general. The main Theorem of this section reads as follows:

\begin{theorem}\label{mainthmII}
Let $\Omega\subset\mathbb{R}^N$ be a bounded domain having a smooth boundary and let us assume that (A1)-(A4) hold. If \eqref{equlinII} has only the trivial solution for $\lambda=0,1$ and 

\[i(A_0)\neq i(A_1),\]
then there exists a bifurcation point for \eqref{equII} in $(0,1)$.
\end{theorem}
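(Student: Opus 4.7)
The strategy is to express $\sfl(L,I)$ as a finite combination of two-dimensional spectral flows via the block-diagonal decomposition introduced above and then apply Theorem~\ref{mainthmI}. By Lemma~\ref{invariant} together with the trivial invariance $T(E_k)\subset E_k$, each $L_\lambda=T+K_\lambda$ preserves every $E_k$; the hypothesis that \eqref{equlinII} has only the trivial solution for $\lambda\in\{0,1\}$ forces $L_0$ and $L_1$ to be invertible, hence every block $L^k_0,L^k_1$ is invertible and each restricted path $L^k$ is admissible.

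The first step is to show that only finitely many blocks contribute. From the explicit formula \eqref{Lmatrix} and (A1), the functions $a_\lambda,b_\lambda,c_\lambda$ are uniformly bounded on the compact interval $I$; combined with $\lambda_k\to\infty$, this yields some $k_0\in\mathbb{N}$ such that for all $k\geq k_0$ and all $\lambda\in I$ the operator $L^k_\lambda$ differs from $\mathrm{diag}(-1,1)$ by an operator of norm less than $\tfrac12$. Thus each such $L^k_\lambda$ is invertible with $\|(L^k_\lambda)^{-1}\|\leq 2$, uniformly in $k,\lambda$, so the restriction of $L_\lambda$ to the closed invariant subspace $E^{>k_0}:=\overline{\bigoplus_{k\geq k_0}E_k}$ is invertible for every $\lambda\in I$.

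Next I would split $E=E^{\leq k_0}\oplus E^{>k_0}$ with $E^{\leq k_0}:=\bigoplus_{k<k_0}E_k$, apply the Additivity axiom together with the Normalisation axiom on $E^{>k_0}$, and then iterate Additivity on the finite orthogonal sum $E^{\leq k_0}$ to get
\[
\sfl(L,I)=\sum_{k<k_0}\sfl(L^k,I).
\]
Each $\sfl(L^k,I)$ is the spectral flow of an admissible path in a two-dimensional space, so by the Dimension axiom it equals $\mu_{Morse}(L^k_0)-\mu_{Morse}(L^k_1)$. Since $L^k_\lambda$ is an invertible symmetric operator on a two-dimensional space at $\lambda\in\{0,1\}$, one has $\sgn(L^k_\lambda)=2-2\mu_{Morse}(L^k_\lambda)$, whence $\sfl(L^k,I)=\tfrac12(\sgn(L^k_1)-\sgn(L^k_0))$. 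The tail blocks $k\geq k_0$ have vanishing signature at both endpoints (being of the form $\mathrm{diag}(-1,1)+o(1)$), so summing over all $k$ yields
\[
\sfl(L,I)=i(A_1)-i(A_0)\neq 0,
\]
and Theorem~\ref{mainthmI} produces a bifurcation point in $(0,1)$.

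The main technical point I expect to have to justify carefully is the uniform-in-$k$ control of $\|(L^k_\lambda)^{-1}\|$ for $k\geq k_0$ and its use in identifying $L_\lambda|_{E^{>k_0}}$ as a genuinely invertible bounded operator on the closed sum; once this is in hand, everything else reduces to the axiomatic properties of the spectral flow and to the routine identity $\sgn=2-2\mu_{Morse}$ for a two-dimensional invertible symmetric operator.
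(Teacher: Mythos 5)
Your argument is correct and leads to the same spectral flow formula $\sfl(L,I)=i(A_1)-i(A_0)$ as the paper, but it takes a genuinely leaner route to it. The paper proves the reduction to finitely many blocks by a homotopy $h(t,\lambda)=T+Q_nK_\lambda Q_n+t\,Q_n^\perp K_\lambda Q_n^\perp$, whose admissibility rests on the uniform norm convergence $\|Q_n^\perp K_\lambda\|\to 0$; that in turn invokes the compactness-plus-strong-convergence lemma of Appendix~\ref{CompactStrongConv}. You bypass both the homotopy and the appendix by reading off from the explicit $2\times 2$ formula~\eqref{Lmatrix} that the blocks $L^k_\lambda$ for $k\geq k_0$ are uniformly small perturbations of $\operatorname{diag}(-1,1)$, hence invertible with a uniform bound on $\|(L^k_\lambda)^{-1}\|$; the block-diagonal sum $L_\lambda|_{E^{>k_0}}$ is then invertible for \emph{every} $\lambda\in I$, so Additivity together with Normalisation kills the tail outright, and iterated Additivity plus the Dimension axiom handle the finite part. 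This is a cleaner argument for this particular theorem, exploiting the fact that $K_\lambda$ is already reduced by the $E_k$ (Lemma~\ref{invariant}) so that the tail invertibility can be checked block by block; the paper's homotopy/appendix machinery is more robust and does not depend on having an explicit block formula, which is presumably why the author set it up that way. The remaining steps -- endpoint invertibility of each $L^k$, the identity $\sgn=2-2\mu_{Morse}$ for invertible symmetric $2\times 2$ matrices, the vanishing of $\sgn(L^k_\lambda)$ for $k\geq k_0$, and the appeal to Theorem~\ref{mainthmI} -- match the paper. One small point worth making explicit if you wrote this up in full: when you apply Additivity with $H_1=E^{\leq k_0}$ and $H_2=E^{>k_0}$ you need the restriction to $H_2$ to lie in $\Phi_S(H_2)$; this holds precisely because you have shown it is invertible, so it is worth stating, but it is not a gap.
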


\begin{proof}
Let us recall that $L_\lambda$ is of the form $L_\lambda=T+K_\lambda$, where the operators on the right hand side were introduced in \eqref{T} and \eqref{compactness}, respectively. Moreover, $L_0$ and $L_1$ are invertible since \eqref{equlinII} has no non-trivial solutions for these parameter values by assumption.\\
We denote by $Q_n:=\sum^n_{k=1}{P_k}$ the orthogonal projection onto $\bigoplus^n_{k=1}{E_k}$ and by $Q^\perp_n$ the corresponding complementary projection, i.e. $Q^\perp_n=I_E-Q_n$. We note that 

\begin{align}\label{L}
\begin{split}
L_\lambda&=T+Q_nK_\lambda Q_n+Q^\perp_nK_\lambda Q_n+Q_nK_\lambda Q^\perp_n+Q^\perp_nK_\lambda Q^\perp_n\\
&=T+Q_nK_\lambda Q_n+Q^\perp_nK_\lambda Q^\perp_n,\quad n\in\mathbb{N},
\end{split}
\end{align}
where the second equality is a simple consequence of Lemma \ref{invariant}. We now claim that there are $n_0\in\mathbb{N}$ and a constant $C>0$ such that for all $n\geq n_0$

\begin{align}\label{invI}
\|Tu+Q_nK_\lambda Q_nu\|\geq 2C\,\|u\|,\quad u\in E,\,\, \lambda=0,1,
\end{align}
and

\begin{align}\label{invII}
\|Q^\perp_nK_\lambda Q^\perp_n\|\leq C,\quad \lambda\in [0,1].
\end{align}
The reader can find in Appendix \ref{CompactStrongConv} a proof of the fact that if $\{S_n\}_{n\in\mathbb{N}}$ is a sequence in $\mathcal{L}(E)$ which converges strongly to some $S\in\mathcal{L}(H)$, and $K_\lambda$ , $\lambda\in[0,1]$, is a continuous family of compact operators, then $S_nK_\lambda$ converges in norm to $SK_\lambda$ as $n\rightarrow\infty$, and the convergence is even uniform in $\lambda$. Consequently, since $Q^\perp_n$ converges strongly to $0$ as $n\rightarrow\infty$ and $\|Q^\perp_n\|=1$, we infer that  

\begin{align}\label{conv}
\|Q^\perp_nK_\lambda Q^\perp_n\|\leq \|Q^\perp_nK_\lambda\|\rightarrow 0,\quad n\rightarrow\infty.
\end{align}
Since $L_\lambda$ is invertible for $\lambda=0,1$, there is $C>0$ such that $\|L_\lambda u\|\geq 3C\,\|u\|$ for $u\in E$ and $\lambda=0,1$. We obtain from \eqref{L} and \eqref{conv} that there is $n_0$ such that $\|Tu+Q_nK_\lambda Q_nu\|\geq 2C\,\|u\|$ for all $n\geq n_0$, $u\in E$ and $\lambda=0,1$, which is \eqref{invI}. After possibly increasing $n_0$, we can assume that \eqref{invII} holds for the same constant $C>0$, where we use that the convergence in \eqref{conv} is uniform in $\lambda$.\\ 
We now assume that $n_0$ in \eqref{invI} and \eqref{invII} is sufficiently large such that $\sgn(L^k_\lambda)=0$ for all $\lambda\in I$ and all $k\geq n_0$, and we consider for some $n\geq n_0$ the homotopy $h:[0,1]\times[0,1]\rightarrow\Phi_S(E)$ defined by

\[h(t,\lambda)=T+Q_{n}K_\lambda Q_{n}+t\,Q^\perp_{n}K_\lambda Q^\perp_{n}.\]
By \eqref{invI} and \eqref{invII}, we conclude that

\[\|h(t,\lambda)u\|\geq C\,\|u\|,\quad u\in E,\,\,\lambda=0,1,\]
and hence $h(t,0)$ and $h(t,1)$ are invertible for all $t\in[0,1]$ since they are Fredholm of index $0$. The Homotopy Invariance Property of the spectral flow yields

\begin{align}\label{sflequI}
\sfl(L,I)=\sfl(\{T+Q_{n}K_\lambda Q_{n}\}_{\lambda\in I},I).
\end{align}
By Lemma \ref{invariant} we have

\begin{align*}
Q_{n}K_\lambda Q_{n}=\sum^{n}_{k,l=1}{P_kK_\lambda P_l}=\sum^{n}_{k=1}{P_kK_\lambda P_k},
\end{align*}
and since $T$ is also reduced by the projections $P_k$ it follows likewise that

\[Q_{n}T Q_{n}=\sum^{n}_{k,l=1}{P_kT P_l}=\sum^{n}_{k=1}{P_kT P_k}.\]
We obtain

\[T+Q_{n}K_\lambda Q_{n}=Q_{n}TQ_{n}+Q_{n}K_\lambda Q_{n}+Q^\perp_{n}T Q^\perp_{n}=\sum^{n}_{k=1}{(P_k(T+K_\lambda)P_k)}+Q^\perp_{n}T Q^\perp_{n}.\] 
Now the Additivity and the Normalisation Property of the spectral flow yield 

\begin{align*}
\begin{split}
\sfl(\{T+Q_{n} K_\lambda Q_{n}\}_{\lambda\in I},I)&=\sfl(\{\sum^{n}_{k=1}{(P_k(T+K_\lambda)P_k)}+Q^\perp_{n}T Q^\perp_{n}\}_{\lambda\in I},I)\\
&=\sum^{n}_{k=1}{\sfl(\{P_k(T+K_\lambda)P_k\}_{\lambda\in I},I)}=\sum^{n}_{k=1}{\sfl(L^k,I)},
\end{split}
\end{align*}
where we use that $Q^\perp_{n}T Q^\perp_{n}$ is an invertible operator on the image of $Q^\perp_n$. By the Dimension Property of the spectral flow, we obtain 

\begin{align}\label{sflMorse}
\sum^{n}_{k=1}{\sfl(L^k,I)}=\sum^{n}_{k=1}{(\mu_{Morse}(L^k_0)-\mu_{Morse}(L^k_1))}.
\end{align}
As $L_0$ and $L_1$ are invertible by assumption, we see that $L^k_0$ and $L^k_1$ are invertible for all $k\in\mathbb{N}$. Since the signature and the Morse index of an invertible symmetric $2\times 2$-matrix $B$ are related by $\frac{1}{2}\sgn B=1-\mu_{Morse}(B)$, we can rewrite the right hand side in \eqref{sflMorse} by

\begin{align*}
\sum^{n}_{k=1}{(\mu_{Morse}(L^k_0)-\mu_{Morse}(L^k_1))}&=\sum^{n}_{k=1}{\frac{1}{2}(\sgn(L^k_1)-\sgn(L^k_0))}=\frac{1}{2}\sum^{n}_{k=1}{\sgn(L^k_1)}-\frac{1}{2}\sum^{n}_{k=1}{\sgn(L^k_0)}\\
&=i(A_1)-i(A_0),
\end{align*}  
where we have used in the last step that $\sgn(L^k_\lambda)=0$ for all $\lambda\in I$ and all $k\geq n_0$ by our choice of $n_0$. Consequently, we have shown that

\begin{align}\label{sflformula}
\sfl(L,I)=i(A_1)-i(A_0),
\end{align} 
and now the assertion follows from Theorem \ref{mainthmI}.
\end{proof}

\begin{rem}
Let us point out that we have derived in the proof of Theorem \ref{mainthmII} in \eqref{sflformula} a spectral flow formula for the path $\{L_\lambda\}_{\lambda\in I}$, which is of independent interest. The spectral flow can also be defined for paths of unbounded selfadjoint Fredholm operators (cf. e.g. \cite{UnbSpecFlow} or \cite{Homoclinics}). Let us consider on $L^2(\Omega,\mathbb{R}^2)$ the differential operators $\mathcal{A}_\lambda$ on the domain $W=H^2(\Omega,\mathbb{R}^2)\cap H^1_0(\Omega,\mathbb{R}^2)$ defined by

\[\mathcal{A}_\lambda \begin{pmatrix} u\\v\end{pmatrix}:=\begin{pmatrix} -\Delta v\\-\Delta u\end{pmatrix}+\begin{pmatrix} a_\lambda&b_\lambda\\b_\lambda &c_\lambda\end{pmatrix}\begin{pmatrix} u\\v\end{pmatrix}.\]
Note that elements in the kernel of $\mathcal{A}_\lambda$ are the solutions of the equations \eqref{equlinII}. It can be shown that the spectral flow of the path $\mathcal{A}=\{\mathcal{A}_\lambda\}_{\lambda\in I}$ coincides with the spectral flow of the corresponding path $L=\{L_\lambda\}_{\lambda\in I}$ in \eqref{Riesz} (cf. \cite[Thm. 2.6]{CalcVar}), and so \eqref{sflformula} yields also a spectral flow formula for the differential operators $\mathcal{A}_\lambda$.    
\end{rem}
\noindent
As announced in the previous section, Theorem \ref{mainthmII} uses only the coefficients of the equation \eqref{equlinI} and no information about solutions of the linearisations \eqref{equlinII} for $\lambda\in(0,1)$.\\
Let us now consider the equations \eqref{equSzulkin} where $A_\lambda=\lambda\,A$ depends linearly on the parameter $\lambda$. Here we want to change the setting slightly and instead of restricting $\lambda$ to the unit interval $I$, we want to consider the case that $\lambda\in\mathbb{R}$. As before, we have for each $\lambda\in\mathbb{R}$ the integral number $i(A_\lambda)$. We obtain from Theorem \ref{mainthmII} the following result that was proved by Szulkin in \cite[\S 5]{Szulkin}.

\begin{cor}\label{Corollary-Szulkin}
Let $\Omega\subset\mathbb{R}^N$ be a bounded domain having a smooth boundary and let us assume that (A1)-(A4) hold, where $a_\lambda(x)=\lambda\,a$, $b_\lambda(x)=\lambda\,b$ and $c_\lambda(x)=\lambda\,c$ for some real numbers $a,b,c$ and $\lambda\in\mathbb{R}$. If $i(\lambda\,A)$ jumps at some $\lambda^\ast\in\mathbb{R}$, then $\lambda^\ast$ is a bifurcation point.
\end{cor}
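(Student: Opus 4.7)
The plan is to localise the hypothesis around $\lambda^*$ and apply Theorem \ref{mainthmII} on a small sub-interval containing it. First I would establish that the set $\Sigma:=\{\lambda\in\mathbb{R}:L_\lambda\text{ is not invertible}\}$ is locally finite in $\mathbb{R}$. Since $a_\lambda,b_\lambda,c_\lambda$ are linear in $\lambda$, each $2\times 2$ matrix $L^k_\lambda$ from \eqref{Lmatrix} is affine in $\lambda$, so $\det L^k_\lambda$ is a polynomial of degree at most two. Because $\lambda_k\to\infty$, for any bounded interval $[-M,M]$ there exists $k_0$ with $L^k_\lambda$ invertible for all $k\geq k_0$ and $\lambda\in[-M,M]$. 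Hence within $[-M,M]$ the set $\Sigma$ is the union of the finitely many (finite) zero sets of $\det L^k_\lambda$ for $k<k_0$, proving local finiteness.

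Second, on each connected component of $\mathbb{R}\setminus\Sigma$ the half-integer $i(\lambda A)$ is constant: on such a component every $L^k_\lambda$ remains invertible, so its eigenvalues cannot cross zero and $\sgn(L^k_\lambda)$ is constant; for $k\geq k_0$ this sign is already zero, so the sum defining $i(A_\lambda)$ is a constant finite sum. The jump hypothesis therefore forces $\lambda^*\in\Sigma$ and guarantees that $i$ assumes different values on the two components of $\mathbb{R}\setminus\Sigma$ immediately adjacent to $\lambda^*$.

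Finally, I would pick $\lambda_1<\lambda^*<\lambda_2$ with $\Sigma\cap[\lambda_1,\lambda_2]=\{\lambda^*\}$ and $i(\lambda_1 A)\neq i(\lambda_2 A)$; after reparametrising $[\lambda_1,\lambda_2]$ linearly onto $I$, Theorem \ref{mainthmII} applies and yields a bifurcation point $\tilde\lambda\in(\lambda_1,\lambda_2)$ for \eqref{equSzulkin}. By the implicit function theorem remark recalled after Theorem \ref{mainthmI}, every bifurcation point of \eqref{equII} lies in $\Sigma$, and since $\lambda^*$ is the only element of $\Sigma$ in $(\lambda_1,\lambda_2)$ we conclude $\tilde\lambda=\lambda^*$. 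The chief technical point is the discreteness of $\Sigma$; it is precisely this that lets one identify the bifurcation point produced by Theorem \ref{mainthmII} with $\lambda^*$ itself rather than only a nearby parameter value, and it relies crucially on the linear dependence on $\lambda$ peculiar to \eqref{equSzulkin}.
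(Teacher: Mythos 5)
Your proposal is correct and reaches the conclusion by the same overall strategy as the paper: locate $\lambda^\ast$ in the set $\Sigma$ of non-invertible parameters, use discreteness of $\Sigma$ to find an interval $[\lambda_1,\lambda_2]$ around $\lambda^\ast$ with invertible endpoints and differing index values, invoke Theorem \ref{mainthmII} there, and then use the implicit-function-theorem remark to pin the bifurcation point to $\lambda^\ast$. The one genuine deviation is how you establish discreteness of $\Sigma$. The paper observes that here $L_\lambda=T+\lambda K$ with $T$ invertible and $K$ compact, so non-invertibility of $L_\lambda$ is equivalent to $-1/\lambda$ being an eigenvalue of the compact operator $T^{-1}K$, and discreteness follows from the spectral theory of compact operators. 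You instead exploit the block decomposition $E=\bigoplus_k E_k$ (which is also what underlies the definition of $i$): each $\det L^k_\lambda$ is a polynomial in $\lambda$ of degree at most two, and for $\lambda$ in a bounded interval only finitely many blocks can fail to be invertible because $\lambda_k\to\infty$. Your argument is more elementary and self-contained, requiring no appeal to Riesz--Schauder theory, but note that identifying $\Sigma$ with the union of the block-level zero sets tacitly uses that $L_\lambda$ is invertible if and only if all $L^k_\lambda$ are invertible with a uniform lower bound, which holds because $L^k_\lambda$ tends to $\operatorname{diag}(-1,1)$ uniformly on bounded $\lambda$-sets as $k\to\infty$; it would be worth stating this explicitly. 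Both routes buy the same thing, and the paper's route has the small advantage of being insensitive to the particular block structure, while yours keeps everything in the same coordinate frame used to define $i(A_\lambda)$.
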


\begin{proof}
We first note that the operators $L_\lambda$ are of the form $T+\lambda K$, where $T$ is invertible and $K$ is compact and does not depend on $\lambda$. Hence, by the spectral theory of compact operators, the set of all $\lambda\in\mathbb{R}$ for which $L_\lambda$ is not invertible is discrete. Secondly, if $L^k_\lambda$ is non-invertible for some $k\in\mathbb{N}$, then $L_\lambda$ is non-invertible as well.\\
Let us now assume that $i(\lambda\,A)$ jumps at some $\lambda^\ast$. Then there is $k\in\mathbb{N}$ such that $L^k_{\lambda^\ast}$ is not invertible. Hence $L_{\lambda^\ast}$ is not invertible and there is $\varepsilon>0$ such that $L_\lambda$ is invertible if $\lambda\in(\lambda^\ast-2\varepsilon,\lambda^\ast+2\varepsilon)\setminus\{\lambda_0\}$. Consequently, $L_{\lambda^\ast-\varepsilon}$ and $L_{\lambda^\ast+\varepsilon}$ are invertible and since $i(A_{\lambda^\ast-\varepsilon})\neq i(A_{\lambda^\ast+\varepsilon})$ the assertion follows from Theorem \ref{mainthmII}.
\end{proof}


\section{Bifurcation by Comparison}
For the considerations of this section, we want to introduce at first a theorem about the spectral flow that was proved in \cite{BifJac}. Before, we need to extend the definition of the spectral flow, which we recalled in the second section, to paths $L=\{L_\lambda\}_{\lambda\in I}$ in $\Phi_S(H)$ that do not have invertible endpoints, i.e. that are not admissible. Since $0$ is an isolated eigenvalue of finite multiplicity (cf. e.g. \cite[Lemma 2.2]{ProcHan}), there exists $\delta\geq0$ such that $L_0+\mu I_H$ and $L_1+\mu I_H$ are invertible for all $0<\mu\leq\delta$, where $I_H$ denotes the identity operator on $H$. We set

\[\sfl(L,I):=\sfl(L+\delta I_H,I).\]
Of course, if $L$ is admissible, then this definition coincides with the previous one by the Homotopy Invariance Property. In what follows, we write $T\leq S$ for $T,S\in\Phi_S(H)$ if

\[\langle Tz,z\rangle_E\leq\langle Sz,z\rangle_H,\quad z\in H.\] 
A proof of the following proposition can be found in \cite[\S 7]{BifJac}. 

\begin{prop}\label{comparison}
Let $L=\{L_\lambda\}_{\lambda\in I}$ and $M=\{M_\lambda\}_{\lambda\in I}$ be paths in $\Phi_S(H)$ such that $L_\lambda-M_\lambda$ is compact for all $\lambda\in I$. If 

\[L_0\leq M_0\,\,\text{and}\,\, M_1\leq L_1,\]
then 

\[\sfl(M,I)\leq\sfl(L,I).\]
\end{prop}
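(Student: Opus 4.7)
The plan is to combine a rectangular ``Stokes-type'' argument with the monotonicity of the spectral flow along affine paths of selfadjoint Fredholm operators whose derivative has a definite sign.

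First, using the extension $\sfl(\cdot,I)=\sfl(\cdot+\delta I_H,I)$, I would fix a single $\delta>0$ small enough that all four operators $L_0+\delta I_H$, $L_1+\delta I_H$, $M_0+\delta I_H$, $M_1+\delta I_H$ are invertible; this is possible because $0$ is at most an isolated eigenvalue of finite multiplicity for each. Replacing $L$ and $M$ by $L+\delta I_H$ and $M+\delta I_H$ preserves compactness of $L_\lambda-M_\lambda$, the two endpoint orderings, and both spectral flows (by definition), so without loss of generality $L_0,L_1,M_0,M_1$ are invertible.

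Next, I would define $h:I\times I\to\Phi_S(H)$ by $h(\lambda,s):=(1-s)M_\lambda+sL_\lambda=M_\lambda+s(L_\lambda-M_\lambda)$, which lies in $\Phi_S(H)$ everywhere since $L_\lambda-M_\lambda$ is compact. Writing $\alpha_0(s):=h(0,s)$ and $\alpha_1(s):=h(1,s)$ for the two vertical sides of the square, consider the two ``staircase'' concatenations $P^{-}:=M\cdot\alpha_1$ and $P^{+}:=\alpha_0\cdot L$, each a continuous path from $M_0$ to $L_1$ whose intermediate corner (respectively $M_1$ and $L_0$) is invertible. Pushing the corner of the staircase along the diagonal from $(1,0)$ via $(1-t,0)\to(1-t,1)$ to $(0,1)$ inside $I\times I$ and composing with $h$ produces a homotopy $\{P_t\}_{t\in I}$ in $\Phi_S(H)$ between $P^{-}$ and $P^{+}$ with endpoints fixed at the invertible operators $M_0$ and $L_1$. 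The Homotopy axiom therefore gives $\sfl(P^{-},I)=\sfl(P^{+},I)$, and the Concatenation axiom applied to the two sides yields the Stokes equality
\[
\sfl(M,I)+\sfl(\alpha_1,I)=\sfl(\alpha_0,I)+\sfl(L,I).
\]

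Third, I would show $\sfl(\alpha_0,I)\leq 0\leq\sfl(\alpha_1,I)$. Since $\alpha_0$ is affine with $\dot\alpha_0\equiv L_0-M_0\leq 0$, at any crossing $s_0$ the crossing form $\Gamma(\alpha_0,s_0)[u]=\langle(L_0-M_0)u,u\rangle_H$ on $\ker\alpha_0(s_0)$ is negative semi-definite. Passing to the strictly monotone perturbation $\tilde\alpha_0(s):=(1-s)(M_0+\varepsilon I_H)+sL_0$, whose derivative $L_0-M_0-\varepsilon I_H$ is strictly negative definite and whose endpoints remain invertible for small $\varepsilon>0$, the canonical homotopy $\tau\mapsto (1-s)(M_0+\tau\varepsilon I_H)+sL_0$ connecting $\alpha_0$ to $\tilde\alpha_0$ has invertible endpoints throughout, so $\sfl(\alpha_0,I)=\sfl(\tilde\alpha_0,I)$; for $\tilde\alpha_0$ every crossing is regular with negative-definite crossing form, and the Robbin--Salamon formula gives $\sfl(\tilde\alpha_0,I)\leq 0$. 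The symmetric perturbation of $\alpha_1$, using $M_1\leq L_1$, gives $\sfl(\alpha_1,I)\geq 0$; substituting into the Stokes equality yields the claimed inequality $\sfl(M,I)\leq\sfl(L,I)$.

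The main obstacle is the careful construction of the staircase homotopy together with the verification that all the required invertibility conditions are preserved; this is exactly where the compactness hypothesis on $L_\lambda-M_\lambda$ enters essentially, since it is what guarantees that the entire square $h(I\times I)$ lies in $\Phi_S(H)$ and that the deformation at each stage remains a path of selfadjoint Fredholm operators.
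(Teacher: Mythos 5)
The paper itself does not prove Proposition \ref{comparison}; it cites \cite[\S 7]{BifJac} and moves on, so there is no in-paper argument to compare against. Your proof is, however, a correct and self-contained derivation of the comparison principle, built from a rectangular homotopy (``Stokes'') identity plus a crossing-form estimate on the two vertical sides. Each of the three steps holds up: (a) since $0$ is isolated in the spectrum of each of $L_0,L_1,M_0,M_1$, a common $\delta>0$ exists so that the $\delta$-shift makes all four invertible while leaving both spectral flows unchanged by definition, and it preserves compactness of $L_\lambda-M_\lambda$ and the two endpoint orderings; (b) compactness of $L_\lambda-M_\lambda$ is exactly what keeps $h(\lambda,s)=(1-s)M_\lambda+sL_\lambda$ in $\Phi_S(H)$ on the whole square, the corner operators $M_1=\alpha_1(0)$ and $L_0=\alpha_0(1)$ are invertible so concatenation applies, and the staircase homotopy fixes the invertible endpoints $M_0$ and $L_1$ so the Homotopy axiom gives $\sfl(M,I)+\sfl(\alpha_1,I)=\sfl(\alpha_0,I)+\sfl(L,I)$; (c) the perturbation to strictly definite derivative makes all crossings regular, the connecting homotopy preserves invertibility of both endpoints because the shift is small, and the local Robbin--Salamon formula (quoted in Section~\ref{sect:setting}) together with concatenation then yields $\sfl(\alpha_0,I)\le 0\le\sfl(\alpha_1,I)$. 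The only stylistic improvement I would suggest is to perturb the endpoint $L_1$ rather than $M_1$ when treating $\alpha_1$, i.e.\ take $\tilde\alpha_1(s)=(1-s)M_1+s(L_1+\varepsilon I_H)$; this keeps all perturbations of the form $+\varepsilon I_H$ and so is consistent with the direction of the shift used in the definition of $\sfl$ for non-admissible paths. This does not affect correctness -- $M_1-\varepsilon I_H$ is also invertible for small $\varepsilon$ -- but it avoids the reader having to check a separate case.
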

\noindent
Let us now consider again the systems \eqref{equI}. Note that the operators $K_\lambda$ in \eqref{compactness} can be written as 

\[\langle K_\lambda z,\overline{z}\rangle_E=\int_{\Omega}{\langle A_\lambda(x)z,\overline{z}\rangle\,dx},\]
where 

\[A_\lambda(x):=-\begin{pmatrix} a_\lambda(x)&b_\lambda(x)\\ b_\lambda(x)&c_\lambda(x)\end{pmatrix}\]
is a symmetric matrix. Each $A_\lambda(x)$ has two real eigenvalues $\mu^1_\lambda(x)$, $\mu^2_\lambda(x)$, which depend continuously on $(\lambda,x)\in I\times\overline{\Omega}$. We set for $\lambda\in I$

\begin{align*}
\alpha_\lambda&:=\inf_{x\in\overline{\Omega}}\{\mu^1_\lambda(x),\mu^2_\lambda(x)\}=\inf_{x\in\overline{\Omega}}\inf_{\|w\|=1}\langle A_\lambda(x)w,w\rangle_{\mathbb{R}^2},\\ \beta_\lambda&:=\sup_{x\in\overline{\Omega}}\{\mu^1_\lambda(x),\mu^2_\lambda(x)\}=\sup_{x\in\overline{\Omega}}\sup_{\|w\|=1}\langle A_\lambda(x)w,w\rangle_{\mathbb{R}^2},
\end{align*}
and note that these numbers can be easily obtained since $\mu^1_\lambda(x), \mu^2_\lambda(x)$ are just the zeros of quadratic polynomials. For example, for the systems \eqref{equII} we have

\begin{align}\label{alphabeta}
\begin{split}
\alpha_\lambda&=-\frac{a_\lambda+c_\lambda}{2}-\sqrt{\frac{1}{4}(a_\lambda-c_\lambda)^2+b^2_\lambda}\\
\beta_\lambda&=-\frac{a_\lambda+c_\lambda}{2}+\sqrt{\frac{1}{4}(a_\lambda-c_\lambda)^2+b^2_\lambda}.
\end{split}
\end{align}
Let us recall that we denote by $\{\lambda_k\}_{k\in\mathbb{N}}$ the sequence of Dirichlet eigenvalues of the domain $\Omega$ and that $0<\lambda_1\leq\lambda_2\leq\ldots$. Our main theorem of this section reads as follows.

\begin{theorem}\label{thm:comp}
Let $\Omega$ be a bounded domain having a smooth boundary and let us assume that (A1)-(A4) hold and that the linear equations \eqref{equlinI} have only the trivial solution for $\lambda=0,1$.

\begin{itemize}
	\item[(i)] If $\beta_0<\alpha_1$ and there exists $k\in\mathbb{N}$ such that 
	
	\begin{align}\label{condpos}
	\beta_0<\lambda_k<\alpha_1\quad\text{or}\quad \beta_0<-\lambda_k<\alpha_1,
	\end{align}
	then there is a bifurcation point for \eqref{equI}.
	\item[(ii)] If $\beta_1<\alpha_0$ and there exists $k\in\mathbb{N}$ such that 
	
	\begin{align}\label{condneg}
	\beta_1<\lambda_k<\alpha_0\quad\text{or}\quad \beta_1<-\lambda_k<\alpha_0,
	\end{align}
	then there is a bifurcation point for \eqref{equI}. 
\end{itemize}
\end{theorem}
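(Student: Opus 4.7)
The plan is to bracket the given path $L=\{L_\lambda\}_{\lambda\in I}$ between a constant-coefficient reference path $M=\{M_\lambda\}_{\lambda\in I}$ of the type treated in Section \ref{sect:index}, to evaluate $\sfl(M,I)$ by the explicit formula \eqref{sflformula}, and to transfer the resulting estimate to $L$ via the monotonicity principle of Proposition \ref{comparison}. Once a nonzero bound on $\sfl(L,I)$ is established, Theorem \ref{mainthmI} supplies the bifurcation point.

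For part (i) I define $M_\lambda=T+K^M_\lambda$, where $K^M_\lambda$ is built through \eqref{compactness} from constant coefficients $a^M_\lambda=c^M_\lambda=m_\lambda$ and $b^M_\lambda=0$, with $m_\lambda$ interpolating linearly between real numbers $m_0$ and $m_1$. I would pick $m_0$ slightly below $-\beta_0$ and $m_1$ slightly above $-\alpha_1$, close enough that $m_0>m_1$, that neither belongs to the discrete set $\{\pm\lambda_k:k\in\mathbb{N}\}$, and that the element $\pm\lambda_k$ granted by \eqref{condpos} still lies in the open interval $(m_1,m_0)$. The difference $L_\lambda-M_\lambda=K_\lambda-K^M_\lambda$ is compact, and the defining inequalities for $\beta_0$ and $\alpha_1$ give the pointwise matrix bounds $A_0(x)\leq -m_0 I_2$ and $-m_1 I_2\leq A_1(x)$ for all $x\in\overline{\Omega}$. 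Integrated against the Rayleigh form on $E$ via \eqref{compactness}, these translate to $L_0\leq M_0$ and $M_1\leq L_1$, so Proposition \ref{comparison} yields $\sfl(M,I)\leq\sfl(L,I)$.

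I would next evaluate $\sfl(M,I)$ through formula \eqref{sflformula}, which gives $\sfl(M,I)=i(m_1 I_2)-i(m_0 I_2)$. Substituting $a=c=\mu$, $b=0$ into \eqref{Lmatrix} produces a diagonal block with eigenvalues $-1-\mu/\lambda_k$ and $1-\mu/\lambda_k$, so $\sgn L^k$ equals $-2$ for $\mu>\lambda_k$, equals $+2$ for $\mu<-\lambda_k$, and vanishes otherwise. Consequently $\mu\mapsto i(\mu I_2)$ is a non-increasing integer-valued step function on $\mathbb{R}\setminus\{\pm\lambda_k\}$ with unit downward jumps precisely at each $\pm\lambda_k$. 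By construction $(m_1,m_0)$ contains at least one such jump point, so $\sfl(M,I)\geq 1$ and therefore $\sfl(L,I)\geq 1$, which via Theorem \ref{mainthmI} yields the desired bifurcation point.

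Part (ii) is symmetric: I would choose $m_0$ slightly above $-\alpha_0$ and $m_1$ slightly below $-\beta_1$, so that $M_0\leq L_0$ and $L_1\leq M_1$, giving $\sfl(L,I)\leq\sfl(M,I)=i(m_1 I_2)-i(m_0 I_2)\leq -1$ whenever \eqref{condneg} holds. I expect the chief bookkeeping obstacle to be the sign conventions: the matrix $A_\lambda(x)$ of the present section carries the opposite sign to the coefficient matrix of Section \ref{sect:index}, so one has to track with care how pointwise matrix inequalities translate to operator inequalities on $K_\lambda$ and from there to the index $i$. Once these conventions are reconciled, the rest of the argument is a clean sandwich between $L$ and the diagonal constant-coefficient reference path.
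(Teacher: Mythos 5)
Your proof is correct and follows essentially the same route as the paper: construct a constant-coefficient reference path $M$, sandwich $L$ between paths via Proposition \ref{comparison}, and evaluate the reference path's spectral flow on the $E_k$-decomposition. The only cosmetic difference is in how noninvertible endpoints of $M$ are handled — you perturb $m_0,m_1$ off the exceptional set $\{\pm\lambda_k\}$ so that \eqref{sflformula} applies directly, whereas the paper keeps the exact bounds $\beta_0 I_2,\alpha_1 I_2$ (resp.\ $\alpha_0 I_2,\beta_1 I_2$) and uses the $\delta$-shift definition of spectral flow for nonadmissible paths, which yields the explicit eigenvalue-counting formulas \eqref{sflM}, \eqref{sflN} that are reused later in Section 5.
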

\noindent
Let us point out again that no knowledge about solutions of the systems \eqref{equlinI} for $\lambda\in(0,1)$ is used in Theorem \ref{thm:comp}.

\begin{proof}
By definition of $\alpha_0,\alpha_1,\beta_0,\beta_1$, we have the inequalities

\begin{align}\label{estimate}
\alpha_0\, I_2\leq A_0(x)\leq\beta_0\,I_2,\quad \alpha_1\, I_2\leq A_1(x)\leq\beta_1\,I_2,\quad x\in\Omega, 
\end{align} 
where $I_2$ denotes the $2\times2$ identity matrix. Let us now consider the paths of matrices $\{B_\lambda\}_{\lambda\in I}$ and $\{C_\lambda\}_{\lambda\in I}$ given by

\[B_\lambda=(\beta_0+\lambda(\alpha_1-\beta_0)) I_2\quad\text{and}\quad C_\lambda=(\alpha_0+\lambda(\beta_1-\alpha_0)) I_2.\]
We obtain associated paths $M=\{M_\lambda\}_{\lambda\in I}$ and $N=\{N_\lambda\}_{\lambda\in I}$ in $\Phi_S(E)$ by setting

\begin{align*}
\langle M_\lambda z,\overline{z}\rangle_E&:=\langle Tz,\overline{z}\rangle_E+\int_\Omega{\langle B_\lambda z,\overline{z}\rangle\,dx},\quad
\langle N_\lambda z,\overline{z}\rangle_E:=\langle Tz,\overline{z}\rangle_E+\int_\Omega{\langle C_\lambda z,\overline{z}\rangle\,dx}
\end{align*}
and we note that by \eqref{Hessian} $L_\lambda-M_\lambda$ and $L_\lambda-N_\lambda$ are compact for all $\lambda\in I$. Since

\[\langle(L_\lambda-M_\lambda)z,\overline{z}\rangle_E=\int_\Omega{\langle(A_\lambda(x)-B_\lambda) z,\overline{z}\rangle\,dx},\quad z,\overline{z}\in E\]
and 

\[\langle(L_\lambda-N_\lambda)z,\overline{z}\rangle_E=\int_\Omega{\langle(A_\lambda(x)-C_\lambda) z,\overline{z}\rangle\,dx},\quad z,\overline{z}\in E,\]
we obtain from \eqref{estimate} and Proposition \ref{comparison} that

\begin{align}\label{sflestimate}
\sfl(M,I)\leq\sfl(L,I)\leq\sfl(N,I).
\end{align}
Because $L_0$ and $L_1$ are invertible by the assumption that \eqref{equlinI} has no non-trivial solutions for $\lambda=0$ and $\lambda=1$, the assertion follows from Theorem \ref{mainthmI} if we can prove that $\sfl(M,I)>0$ under the assumptions of (i), and  $\sfl(N,I)<0$ under the assumptions of (ii), respectively.\\
Let us first consider the path $M$. Since $M_0$ and $M_1$ are not necessarily invertible, we have by definition $\sfl(M,I)=\sfl(M^\delta,I)$, where $M^\delta:=\{M_\lambda+\delta\,I_E\}_{\lambda\in I}$ for an arbitrarily small $\delta>0$. From the results in Section \ref{sect:index}, we know that there is a decomposition of $E$ into two-dimensional subspaces $E_k$, $k\in\mathbb{N}$, such that the operator $T$ is reduced by this decomposition. Clearly, the $E_k$ reduce $M^\delta$ too, and moreover it is readily seen that

\begin{align*}
M^\delta_\lambda\mid_{E_k}=\begin{pmatrix}
-1+\delta&0\\
0&1+\delta
\end{pmatrix}
+\frac{\beta_0+\lambda(\alpha_1-\beta_0)}{\lambda_k}I_2,\quad\lambda\in I.
\end{align*}
By \eqref{sflformula} we know that 

\begin{align}\label{sflsgn}
\sfl(M,I)=\frac{1}{2}\sum^\infty_{k=1}{\sgn(M^\delta_1\mid_{E_k})}-\frac{1}{2}\sum^\infty_{k=1}{\sgn(M^\delta_0\mid_{E_k})}.
\end{align}
Let us now consider at first 

\[M^\delta_1\mid_{E_k}=\begin{pmatrix}
-1+\delta+\frac{\alpha_1}{\lambda_k}&0\\
0&1+\delta+\frac{\alpha_1}{\lambda_k}
\end{pmatrix}.\]
If $\alpha_1\geq 0$, then $1+\delta+\frac{\alpha_1}{\lambda_k}>0$ for all $k\in\mathbb{N}$ and consequently $\sgn( M^\delta_1\mid_{E_k})$ is either $0$ or $2$. The latter case happens if and only if $-1+\delta+\frac{\alpha_1}{\lambda_k}>0$ and, since $\delta>0$ is arbitrarily small, this is equivalent to $-1+\frac{\alpha_1}{\lambda_k}\geq 0$ and so $\alpha_1\geq \lambda_k$. If, on the other hand, $\alpha_1<0$, then $-1+\delta+\frac{\alpha_1}{\lambda_k}<0$ for all $k\in\mathbb{N}$ and so $\sgn(M^\delta_1\mid_{E_k})$ is either $0$ or $-2$. Here the latter case happens if $1+\delta+\frac{\alpha_1}{\lambda_k}<0$ which means that $\alpha_1<-\lambda_k$. In summary, we obtain

\[\frac{1}{2}\sum^\infty_{k=1}{\sgn(M^\delta_1\mid_{E_k})}=\begin{cases}
\#\{k\in\mathbb{N}:\,\alpha_1\geq\lambda_k\},\,\, &\text{if}\,\,\alpha_1\geq 0\\
-\#\{k\in\mathbb{N}:\,\alpha_1<-\lambda_k\},\,\, &\text{if}\,\,\alpha_1<0,
\end{cases}\]
and by the very same argument we also get that

\[\frac{1}{2}\sum^\infty_{k=1}{\sgn(M^\delta_0\mid_{E_k})}=\begin{cases}
\#\{k\in\mathbb{N}:\,\beta_0\geq\lambda_k\},\,\, &\text{if}\,\,\beta_0\geq 0\\
-\#\{k\in\mathbb{N}:\,\beta_0<-\lambda_k\},\,\, &\text{if}\,\,\beta_0<0.
\end{cases}\]
Consequently, it follows from \eqref{sflsgn} that

\begin{align}\label{sflM}
\sfl(M,I)=\begin{cases}
\#\{k\in\mathbb{N}:\,\alpha_1\geq\lambda_k\}-\#\{k\in\mathbb{N}:\,\beta_0\geq\lambda_k\},\,\, &\text{if}\,\, \alpha_1,\beta_0\geq0,\\
-\#\{k\in\mathbb{N}:\,\alpha_1<-\lambda_k\}-\#\{k\in\mathbb{N}:\,\beta_0\geq\lambda_k\},\,\, &\text{if}\,\, \alpha_1<0,\beta_0\geq 0,\\
\#\{k\in\mathbb{N}:\,\alpha_1\geq\lambda_k\}+\#\{k\in\mathbb{N}:\,\beta_0<-\lambda_k\},\,\, &\text{if}\,\, \alpha_1\geq 0,\beta_0<0,\\
-\#\{k\in\mathbb{N}:\,\alpha_1<-\lambda_k\}+\#\{k\in\mathbb{N}:\,\beta_0<-\lambda_k\},\,\, &\text{if}\,\, \alpha_1,\beta_0<0,
\end{cases}
\end{align}
which is positive if $\beta_0<\lambda_k<\alpha_1$ or $\beta_0<-\lambda_k<\alpha_1$ for some $k\in\mathbb{N}$. This finishes the proof of the first part of Theorem \ref{thm:comp}.\\ 
For the second part we need to show that $\sfl(N,I)=\sfl(N^\delta,I)<0$, where $N^\delta=\{N_\lambda+\delta\,I_E\}_{\lambda\in I}$ for an arbitrarily small $\delta>0$. We leave it to the reader to check that a similar argument as above shows that

\begin{align}\label{sflN}
\sfl(N,I)=\begin{cases}
\#\{k\in\mathbb{N}:\,\beta_1\geq\lambda_k\}-\#\{k\in\mathbb{N}:\,\alpha_0\geq\lambda_k\},\,\, &\text{if}\,\, \alpha_0,\beta_1\geq0,\\
-\#\{k\in\mathbb{N}:\,\beta_1<-\lambda_k\}-\#\{k\in\mathbb{N}:\,\alpha_0\geq\lambda_k\},\,\, &\text{if}\,\, \beta_1<0,\alpha_0\geq 0,\\
\#\{k\in\mathbb{N}:\,\beta_1\geq\lambda_k\}+\#\{k\in\mathbb{N}:\,\alpha_0<-\lambda_k\},\,\, &\text{if}\,\, \beta_1\geq 0,\alpha_0<0,\\
-\#\{k\in\mathbb{N}:\,\beta_1<-\lambda_k\}+\#\{k\in\mathbb{N}:\,\alpha_0<-\lambda_k\},\,\, &\text{if}\,\, \beta_1,\alpha_0<0,
\end{cases}
\end{align}
which is negative if $\beta_1<\lambda_k<\alpha_0$ or $\beta_1<-\lambda_k<\alpha_0$ for some $k\in\mathbb{N}$.
\end{proof}
\noindent
As an example of Theorem \ref{thm:comp}, let us consider once again the systems \eqref{equSzulkin}, where the matrix $A$ does not depend on $x\in\Omega$ and is linear in $\lambda$. Then we obtain from \eqref{alphabeta}

\begin{align}
\begin{split}
\alpha_0&=\beta_0=0\\
\alpha_1&=-\frac{a+c}{2}-\sqrt{\frac{1}{4}(a-c)^2+b^2}\\
\beta_1&=-\frac{a+c}{2}+\sqrt{\frac{1}{4}(a-c)^2+b^2}
\end{split}
\end{align} 
and see that there is a bifurcation point for \eqref{equSzulkin} in $(0,1)$ if

\[-\frac{a+c}{2}-\sqrt{\frac{1}{4}(a-c)^2+b^2}>\lambda_1\quad \text{or}\quad -\frac{a+c}{2}+\sqrt{\frac{1}{4}(a-c)^2+b^2}<-\lambda_1.\]


\section{The case $N=1$}
In this section we consider the special case that $N=1$, i.e. $\Omega$ is a bounded interval in $\mathbb{R}$. For the sake of simplicity, we restrict to the case $\Omega=(0,\pi)$ and so the systems \eqref{equI} are of the form

\begin{equation}\label{equIODE}
\left\{
\begin{aligned}
-u''&= b_\lambda(x)u+c_\lambda(x)v+G'_v(\lambda,x,u,v)&& \,\text{in}\,\,(0,\pi),\\
-v''&= a_\lambda(x)u+b_\lambda(x)v+G'_u(\lambda,x,u,v)&& \,\text{in}\,\,(0,\pi),\\
u(0)&=v(0)=u(\pi)=v(\pi)=0.
\end{aligned}
\right.
\end{equation}
We want to show that our previous results can be used to obtain an estimate on the number of bifurcation points for \eqref{equIODE}. Let us note for later reference the corresponding linearised equations, which are

\begin{equation}\label{equIlinODE}
\left\{
\begin{aligned}
-u''&= b_\lambda(x)u+c_\lambda(x)v&& \,\text{in}\,\,(0,\pi),\\
-v''&= a_\lambda(x)u+b_\lambda(x)v&& \,\text{in}\,\,(0,\pi),\\
u(0)&=v(0)=u(\pi)=v(\pi)=0.
\end{aligned}
\right.
\end{equation}
Before we can state our main result of this section, we need to make another digression about a property of the spectral flow. Let us assume that $L=\{L_\lambda\}_{\lambda\in I}$ is a path of selfadjoint Fredholm operators such that $L_\lambda$ is non-invertible only at the finite number of instants $0<\lambda_1\leq\ldots\leq\lambda_m<1$. Then, by the Concatenation Property of the spectral flow, there is $\varepsilon>0$ such that

\[\sfl(L,I)=\sum^m_{i=1}{\sfl(L,[\lambda_i-\varepsilon,\lambda_i+\varepsilon])}.\] 
From the construction of the spectral flow in \cite{SFLPejsachowicz}, it is intuitively clear (however, not trivial to prove rigorously (cf. \cite[Lemma 4.5]{BifJac})) that

\begin{align}\label{kernelestimate}
|\sfl(L,[\lambda_i-\varepsilon,\lambda_i+\varepsilon])|\leq\dim\ker L_{\lambda_i}.
\end{align}
Let now $f:I\times H\rightarrow\mathbb{R}$ be a family of functionals as in Section \ref{sect:setting} such that $L_\lambda$ is the Riesz representation of $D^2_0f_\lambda$ for $\lambda\in I$ as in \eqref{Riesz}. By the implicit function theorem, if $\lambda^\ast$ is a bifurcation point for $f$, then $\lambda^\ast=\lambda_i$ for some $1\leq i\leq m$. Moreover, $\lambda_i$ is a bifurcation point if $\sfl(L,[\lambda_i-\varepsilon,\lambda_i+\varepsilon])\neq 0$ by Theorem \ref{mainthmI}. From these facts, the following result is readily seen (cf. \cite[Thm 2.1 (ii)]{BifJac}).

\begin{lemma}\label{bifest}
Let $f:I\times H\rightarrow\mathbb{R}$ and $L=\{L_\lambda\}_{\lambda\in I}$ be as in Section \ref{sect:setting}. We assume that $L$ is admissible and $L_\lambda$ is non-invertible for only a finite number of $\lambda\in(0,1)$. Then the number of bifurcation points for $f$ is bounded below by

\[\frac{|\sfl(L,I)|}{\max_{\lambda\in(0,1)}\dim\ker L_\lambda}.\]
\end{lemma}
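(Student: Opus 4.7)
The plan is to combine three ingredients that the author has already stated in the paragraph preceding the lemma: the Concatenation and Normalisation Properties of the spectral flow, the kernel estimate \eqref{kernelestimate}, and the contrapositive of the abstract bifurcation theorem underlying Theorem \ref{mainthmI}.

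I would begin by enumerating the finitely many non-invertible instants as $0<\lambda_1<\ldots<\lambda_m<1$, which lie strictly inside $(0,1)$ because $L$ is admissible, and selecting $\varepsilon>0$ small enough that the intervals $J_i:=[\lambda_i-\varepsilon,\lambda_i+\varepsilon]$ are pairwise disjoint, contained in $(0,1)$, and contain no crossing other than $\lambda_i$. Iterated application of the Concatenation Property at the endpoints $\lambda_i\pm\varepsilon$, together with the Normalisation Property on every resulting subinterval on which $L$ is invertible, gives the decomposition
\[\sfl(L,I)=\sum^m_{i=1}{\sfl(L,J_i)}.\]

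Next, let $B\subset\{\lambda_1,\ldots,\lambda_m\}$ denote the subset of instants that are actually bifurcation points of $f$; by the implicit function theorem, as recalled in Section \ref{sect:setting}, every bifurcation point of $f$ lies in $\{\lambda_1,\ldots,\lambda_m\}$, so $|B|$ is exactly the number of bifurcation points. For each $\lambda_i\notin B$, the family $f$ has no bifurcation on $J_i$, and so the contrapositive of the abstract bifurcation theorem (Theorem \ref{mainthmI} applied to the admissible subpath $L|_{J_i}$) forces $\sfl(L,J_i)=0$. Invoking \eqref{kernelestimate} on the remaining terms then yields
\[|\sfl(L,I)|\leq\sum_{\lambda_i\in B}{|\sfl(L,J_i)|}\leq\sum_{\lambda_i\in B}{\dim\ker L_{\lambda_i}}\leq|B|\cdot\max_{\lambda\in(0,1)}{\dim\ker L_\lambda},\]
and dividing gives the desired bound on $|B|$.

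I do not anticipate a genuine obstacle, since the only non-trivial ingredient, \eqref{kernelestimate}, is imported from \cite[Lemma 4.5]{BifJac}. The one conceptual point worth emphasising in the write-up is the \emph{contrapositive} use of the bifurcation theorem: a crossing that is not a bifurcation point must carry zero local spectral flow, which is what allows the sum over all crossings to be replaced by the sum over $B$ alone and produces the cleanest form of the lower bound.
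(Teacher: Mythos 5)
Your proof is correct and follows exactly the route the paper sketches before stating the lemma: decompose the spectral flow via Concatenation and Normalisation into local contributions at the finitely many crossings, use the implicit function theorem to localise bifurcation points to those crossings, apply the bifurcation theorem in contrapositive form to kill the local spectral flow at non-bifurcating crossings, and then bound what remains by the kernel estimate \eqref{kernelestimate}. The paper leaves this assembly as "readily seen," citing \cite[Thm.~2.1(ii)]{BifJac}, and your write-up is the natural spelling-out of that argument.
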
 
\noindent
We now introduce a natural number $\Gamma(\alpha,\beta)$ for any pair of real numbers $\alpha>\beta$ by

\begin{align*}
\Gamma(\alpha,\beta)=\begin{cases}
\#\{k\in\mathbb{N}:\,\alpha\geq k^2\geq\beta\},\,&\text{if}\,\alpha,\beta\geq0,\\
\#\{k\in\mathbb{N}:\,\alpha\geq k^2\}+\#\{k\in\mathbb{N}:\,\beta<-k^2\}\,&\text{if}\,\alpha\geq0,\,\beta<0,\\
\#\{k\in\mathbb{N}:\,\beta<- k^2<\alpha\},\,&\text{if}\,\alpha,\beta<0,
\end{cases}
\end{align*}
which we need to state our main result of this section.

\begin{theorem}\label{thm:ODE} 
Let $\Omega=(0,\pi)\subset\mathbb{R}$ and let us assume that (A1),(A2) and (A3) hold. We suppose that there are only finitely many $\lambda\in(0,1)$ for which the linear equation \eqref{equIlinODE} has a non-trivial solution and, moreover, we assume that there is only the trivial solution for $\lambda=0,1$.

\begin{itemize}
	\item[(i)] If $\alpha_1>\beta_0$,  then there are at least $\frac{1}{2}\Gamma(\alpha_1,\beta_0)$ bifurcation points for \eqref{equIODE}.
	\item[(ii)] If $\alpha_0>\beta_1$,  then there are at least $\frac{1}{2}\Gamma(\alpha_0,\beta_1)$	bifurcation points for \eqref{equIODE}. 
\end{itemize}
\end{theorem}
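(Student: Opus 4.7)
The plan is to combine the sandwich inequality $\sfl(M,I)\leq\sfl(L,I)\leq\sfl(N,I)$ obtained in \eqref{sflestimate} during the proof of Theorem~\ref{thm:comp} with the abstract counting Lemma~\ref{bifest}. Since $\Omega=(0,\pi)$, the Dirichlet eigenvalues are exactly $\lambda_k=k^2$, so the spectral-flow formulas \eqref{sflM} and \eqref{sflN}, specialised to this $\Omega$, become expressions in squares of positive integers -- precisely the form in which $\Gamma(\alpha,\beta)$ is phrased.

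The ingredient specific to $N=1$ is the pointwise bound $\dim\ker L_\lambda\leq 2$ for every $\lambda\in I$. I would establish this by noting that, by standard ODE regularity, every element of $\ker L_\lambda$ corresponds to a classical solution of the linear ODE system \eqref{equIlinODE}; rewriting this system as a first-order linear ODE in the phase-space variable $w=(u,u',v,v')\in\mathbb{R}^4$, the initial conditions $u(0)=v(0)=0$ cut out a two-dimensional subspace of admissible initial data, and the terminal conditions $u(\pi)=v(\pi)=0$ can only shrink the resulting solution space.

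For part (i), under the hypothesis $\alpha_1>\beta_0$, a case-by-case inspection of \eqref{sflM} with $\lambda_k=k^2$ matches its four sign-branches against the three branches of $\Gamma(\alpha_1,\beta_0)$ and yields $\sfl(M,I)\geq\Gamma(\alpha_1,\beta_0)\geq 0$; the sandwich inequality then gives $|\sfl(L,I)|\geq\Gamma(\alpha_1,\beta_0)$. Feeding both facts into Lemma~\ref{bifest} -- whose hypotheses (admissibility of $L$ and finiteness of its set of non-invertible interior instants) are part of the theorem's standing assumptions -- produces
\[
\#\{\text{bifurcation points}\}\geq\frac{|\sfl(L,I)|}{\max_{\lambda\in(0,1)}\dim\ker L_\lambda}\geq\frac{\Gamma(\alpha_1,\beta_0)}{2},
\]
which is precisely the assertion of~(i). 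Part~(ii) is entirely symmetric: from the right-hand inequality $\sfl(L,I)\leq\sfl(N,I)$ and the analogous analysis of \eqref{sflN} under $\alpha_0>\beta_1$, one reads off $\sfl(N,I)\leq-\Gamma(\alpha_0,\beta_1)\leq 0$, so that $|\sfl(L,I)|\geq\Gamma(\alpha_0,\beta_1)$ and the conclusion follows in the same way.

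The main obstacle is organisational rather than conceptual: the sign split between $\alpha_1,\beta_0$ positive and negative produces four regimes in \eqref{sflM} that must be collapsed onto the three regimes in the definition of $\Gamma$, and the strict versus non-strict inequalities appearing there need to be tracked carefully. Degenerate instants at which some $k^2$ lands exactly at $\beta_0$ or $\alpha_1$ (respectively $\alpha_0$ or $\beta_1$) have to be handled with some care; the invertibility of $L_0$ and $L_1$ coming from the theorem's hypotheses is what ensures that these boundary effects do not erode the stated bound. Beyond this bookkeeping and the mild ODE argument for the kernel bound, the proof is a direct combination of results already established in this paper.
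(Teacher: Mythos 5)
Your argument follows the paper's proof closely: the sandwich inequality $\sfl(M,I)\leq\sfl(L,I)\leq\sfl(N,I)$ from the proof of Theorem~\ref{thm:comp}, the formulas \eqref{sflM} and \eqref{sflN} with $\lambda_k=k^2$, the kernel bound $\dim\ker L_\lambda\leq 2$ coming from \eqref{equIlinODE} being a $2\times 2$ system of ODEs, and Lemma~\ref{bifest}. One caveat in your bookkeeping: in the branch $\alpha_1,\beta_0\geq 0$ of \eqref{sflM} one has $\sfl(M,I)=\#\{k:\beta_0<k^2\leq\alpha_1\}$, which is $\leq\Gamma(\alpha_1,\beta_0)=\#\{k:\beta_0\leq k^2\leq\alpha_1\}$, with strict inequality precisely when $\beta_0$ is a perfect square; so your asserted inequality $\sfl(M,I)\geq\Gamma(\alpha_1,\beta_0)$ points the wrong way in that regime, and the invertibility of $L_0,L_1$ is of no help -- whether $\beta_0$ (or $\alpha_1$, $\alpha_0$, $\beta_1$) lies exactly on a square is a property of the constant-coefficient comparison path, not of $L$. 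Since the paper itself asserts the equality $\sfl(M,I)=\Gamma(\alpha_1,\beta_0)$, which carries the same edge-case imprecision, this does not distinguish your argument from the paper's; the approach is otherwise identical.
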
 

\begin{proof}
In the proof of Theorem \ref{thm:comp}, we constructed paths $M$ and $N$ such that $\sfl(M,I)\leq\sfl(L,I)$ and $\sfl(L,I)\leq\sfl(N,I)$, respectively. Note that the Dirichlet eigenvalues of the domain $\Omega=(0,\pi)$ are $\lambda_k=k^2$, $k\in\mathbb{N}$. If now $\alpha_1>\beta_0$, we obtain from \eqref{sflM} that $\sfl(L,I)\geq\sfl(M,I)=\Gamma(\alpha_1,\beta_0)$. If, however, $\alpha_0>\beta_1$, we get from \eqref{sflN} that $\sfl(L,I)\leq\sfl(N,I)=-\Gamma(\alpha_0,\beta_1)$ and so $|\Gamma(\alpha_0,\beta_1)|\leq|\sfl(L,I)|$.\\ 
Finally, the result follows from Lemma \ref{bifest} if we note that $\dim\ker L_\lambda\leq 2$ as the kernel of $L_\lambda$ consists of solutions of the 2-dimensional system of linear ordinary equations \eqref{equIlinODE}.
\end{proof}
\noindent
Finally, let us consider the equations \eqref{equSzulkin} on $\Omega=(0,\pi)$. It follows from the spectral theory of compact operators that the corresponding linearised equations \eqref{equIlinODE} can only have a non-trivial solution for a finite number of values of the parameter $\lambda$. Hence if we assume that there is only the trivial solution for $\lambda=1$, then we obtain that there at least

\[\frac{1}{2}\max\{k\in\mathbb{N}:-\frac{a+c}{2}-\sqrt{\frac{1}{4}(a-c)^2+b^2}\geq k^2\}\]
or
\[\frac{1}{2}\max\{k\in\mathbb{N}:-\frac{a+c}{2}+\sqrt{\frac{1}{4}(a-c)^2+b^2}\geq -k^2\}\]
distinct bifurcation points for the nonlinear equations \eqref{equSzulkin}, where only one of these numbers can be non-zero. In particular, we can easily construct systems having an arbitrarily high number of bifurcation points in $(0,1)$.


\appendix

\section{Families of Compact Operators and Strong Convergence}\label{CompactStrongConv}
In this appendix we prove a well known assertion, which however we could not find in the literature. In what follows, we let $X,Y$ be Banach spaces and we denote by $\mathcal{L}(X,Y)$ the Banach space of all bounded linear operators with respect to the operator norm. 

\begin{lemma}\label{app-lemma}
Let $\Lambda$ be a compact metric space and $K=\{K_\lambda\}_{\lambda\in\Lambda}$ a continuous family in $\mathcal{L}(X,Y)$ such that $K_\lambda:=K(\lambda,\cdot):X\rightarrow Y$ is compact for all $\lambda\in\Lambda$. Then $K(\Lambda\times B)$ is relatively compact for every bounded subset $B\subset X$.  
\end{lemma}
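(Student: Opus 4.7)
The plan is to use a sequential characterization of relative compactness, exploiting the compactness of $\Lambda$ (a compact metric space, so sequentially compact) together with the operator-norm continuity of the family $K$ and the compactness of each $K_\lambda$.

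First I would pick an arbitrary sequence in $K(\Lambda\times B)$, say $\{K_{\lambda_n}x_n\}_{n\in\mathbb{N}}$ with $\lambda_n\in\Lambda$ and $x_n\in B$, and show that it has a convergent subsequence in $Y$. Since $\Lambda$ is sequentially compact, after passing to a subsequence I can assume that $\lambda_n\to\lambda_0$ for some $\lambda_0\in\Lambda$. Then, using that $K_{\lambda_0}:X\to Y$ is compact and $B$ is bounded, I can extract a further subsequence such that $K_{\lambda_0}x_n$ converges in $Y$ to some limit $y\in Y$.

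The remaining step is to upgrade this to convergence of $K_{\lambda_n}x_n\to y$. Writing
\[
\|K_{\lambda_n}x_n-y\|\leq\|K_{\lambda_n}-K_{\lambda_0}\|_{\mathcal{L}(X,Y)}\cdot\|x_n\|+\|K_{\lambda_0}x_n-y\|,
\]
both terms vanish in the limit: the first because $\{x_n\}$ is bounded and $\lambda\mapsto K_\lambda$ is continuous in the operator norm of $\mathcal{L}(X,Y)$ (this is where the hypothesis of continuity of the family, rather than mere strong continuity, is essential), and the second by construction. Hence $\{K_{\lambda_n}x_n\}$ has a convergent subsequence, so $K(\Lambda\times B)$ is relatively compact.

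I do not anticipate a real obstacle here; the only subtlety is being clear that the hypothesis ``$K$ is continuous in $\mathcal{L}(X,Y)$'' means continuity with respect to the operator norm, so that $\|K_{\lambda_n}-K_{\lambda_0}\|\to 0$ and the boundedness of $B$ together kill the first term. If one preferred an argument avoiding sequences, the same ingredients produce a direct total-boundedness proof: cover $\Lambda$ by finitely many sets on which $\lambda\mapsto K_\lambda$ oscillates by less than $\varepsilon/(2\sup_{x\in B}\|x\|)$, then cover each $K_{\lambda_i}(B)$ by finitely many $\varepsilon/2$-balls using compactness of $K_{\lambda_i}$, and combine. Both routes yield the conclusion in Banach spaces $X,Y$.
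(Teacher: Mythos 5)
Your proof is correct and follows essentially the same route as the paper: pass to a subsequence with $\lambda_n\to\lambda_0$ by sequential compactness of $\Lambda$, extract a further subsequence so that $K_{\lambda_0}x_n$ converges using compactness of $K_{\lambda_0}$, and then use the triangle inequality together with norm-continuity of $\lambda\mapsto K_\lambda$ and boundedness of $B$ to conclude $K_{\lambda_n}x_n\to y$. The alternative total-boundedness sketch you mention is a valid variant but not needed.
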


\begin{proof}
Let $\{(\lambda_n,x_n)\}_{n\in\mathbb{N}}$ be a sequence in $\Lambda\times B$ and $c>0$ such that $\|x_n\|\leq c$, $n\in\mathbb{N}$. Since $\Lambda$ is sequentially compact, we can find a subsequence $\{\lambda_{n_k}\}_{k\in\mathbb{N}}\subset \{\lambda_n\}_{n\in\mathbb{N}}$ converging to some $\lambda^\ast\in \Lambda$. Moreover, by the compactness of $K_{\lambda^\ast}$ we can thin out $\{(\lambda_{n_k},x_{n_k})\}_{k\in\mathbb{N}}$ to obtain a subsequence $\{(\lambda_{n_l},x_{n_l})\}_{l\in\mathbb{N}}\subset\{(\lambda_n,x_n)\}_{n\in\mathbb{N}}$ such that $\lambda_{n_l}\rightarrow \lambda^\ast$ and $K(\lambda^\ast,x_{n_l})$ converges to some $y\in Y$. Then

\begin{align*}
\|K(\lambda_{n_l},x_{n_l})-y\|&\leq \|K(\lambda_{n_l},x_{n_l})-K(\lambda^\ast,x_{n_l})\|+\|K(\lambda^\ast,x_{n_l})-y\|\\
&\leq \|K_{\lambda_{n_l}}-K_{\lambda^\ast}\|\|x_{n_l}\|+\|K(\lambda^\ast,x_{n_l})-y\|\\
&\leq c\,\|K_{\lambda_{n_l}}-K_{\lambda^\ast}\|+\|K(\lambda^\ast,x_{n_l})-y\|\rightarrow 0,
\end{align*} 
where the first term converges to zero because of the continuity of the family $K$ with respect to the norm topology.
\end{proof}
\noindent
Let us recall that a sequence $\{S_k\}_{k\in\mathbb{N}}$ of operators in $\mathcal{L}(X,Y)$ is called strongly convergent to $S\in\mathcal{L}(X,Y)$ if $\|S_ku-Su\|\rightarrow0$ as $k\rightarrow\infty$ for every $u\in X$. Clearly, every sequence that is convergent in $\mathcal{L}(X,Y)$ (i.e. with respect to the norm topology) is also strongly convergent.

\begin{cor}\label{appendix}
Let $K=\{K_\lambda\}_{\lambda\in\Lambda}$ be a family of compact operators as in Lemma \ref{app-lemma} and let $\{S_k\}_{k\in\mathbb{N}}$ be a sequence in $\mathcal{L}(Y)$ such that $S_k$ converges to $S\in\mathcal{L}(Y)$ strongly. Then

\[\sup_{\lambda\in\Lambda}\|S_kK_\lambda-SK_\lambda\|\rightarrow 0,\quad k\rightarrow\infty,\]
i.e., $S_kK_\lambda$ converges to $SK_\lambda$ in $\mathcal{L}(X,Y)$ uniformly in $\lambda\in\Lambda$.
\end{cor}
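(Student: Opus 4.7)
The plan is to reduce the claim to the standard fact that a strongly convergent sequence of bounded operators converges uniformly on compact subsets. The key link is Lemma \ref{app-lemma}, which supplies the compact set on which the uniform convergence must take place.

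First I would set $B_X := \{u \in X : \|u\| \leq 1\}$ and let $\mathcal{K}$ denote the closure in $Y$ of $K(\Lambda \times B_X)$. By Lemma \ref{app-lemma} the set $\mathcal{K}$ is compact in $Y$. The main observation is then that
\[
\sup_{\lambda\in\Lambda}\|S_kK_\lambda-SK_\lambda\|
=\sup_{\lambda\in\Lambda}\sup_{\|u\|\le 1}\|(S_k-S)K_\lambda u\|
\le\sup_{y\in\mathcal{K}}\|(S_k-S)y\|,
\]
so it suffices to prove that $\sup_{y\in\mathcal{K}}\|(S_k-S)y\|\to 0$ as $k\to\infty$.

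Next I would invoke the Banach--Steinhaus theorem: since $\{S_ku\}_{k\in\mathbb{N}}$ is convergent and hence bounded for each fixed $u\in Y$, the sequence $\{S_k\}_{k\in\mathbb{N}}$ is uniformly bounded, and setting $M:=\sup_{k\in\mathbb{N}}\|S_k-S\|<\infty$ gives a uniform Lipschitz constant for the maps $S_k-S$. Given $\varepsilon>0$, compactness of $\mathcal{K}$ yields finitely many points $y_1,\dots,y_n\in\mathcal{K}$ such that every $y\in\mathcal{K}$ satisfies $\|y-y_i\|<\varepsilon/(2M)$ for some $i$. By the strong convergence $S_k\to S$, pick $k_0$ large enough that $\|(S_k-S)y_i\|<\varepsilon/2$ for all $k\ge k_0$ and all $i=1,\dots,n$. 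Then for any $y\in\mathcal{K}$ and $k\ge k_0$,
\[
\|(S_k-S)y\|\le\|(S_k-S)(y-y_i)\|+\|(S_k-S)y_i\|<M\cdot\frac{\varepsilon}{2M}+\frac{\varepsilon}{2}=\varepsilon,
\]
which gives the desired uniform estimate on $\mathcal{K}$ and completes the proof.

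There is no real obstacle here; the only conceptual point is to recognise that Lemma \ref{app-lemma} transforms the $\lambda$-uniformity question into a question about convergence on a single compact set, after which the standard ``strong convergence implies uniform convergence on compacta'' argument (via Banach--Steinhaus plus a finite $\varepsilon$-net) finishes the job.
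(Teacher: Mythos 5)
Your proof is correct and follows essentially the same route as the paper's: both invoke Lemma \ref{app-lemma} to get a (relatively) compact image, use the Uniform Boundedness Principle to obtain a uniform bound on the operator norms, and finish with a finite $\varepsilon$-net argument. The only cosmetic differences are that you work with $S_k - S$ directly (a two-term triangle inequality with an $\varepsilon/(2M)$-net) while the paper splits into three terms with an $\varepsilon/(3C)$-net, and you pass to the closure $\mathcal{K}$ explicitly; neither changes the substance.
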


\begin{proof}
We note at first that by the Uniform Boundedness Principle there is a constant $C>0$ such that $\|S_k\|<C$ for all $k\in\mathbb{N}$, and we assume in addition that $C$ is greater than $\|S\|$. Let $B_r(u)$ denote the ball of radius $r>0$ around $u$. By Lemma \ref{app-lemma}, the set $K(\Lambda\times B_1(0))$ is relatively compact in $Y$ and hence for every $\varepsilon>0$ there exist $y_1,\ldots,y_N\in Y$ such that 

\[K(\Lambda\times B_1(0))\subset \bigcup^N_{i=1}{B_{\frac{\varepsilon}{3C}}(y_i)}.\]
Let now $x\in B_1(0)$ and $\lambda\in\Lambda$ be arbitrary. We choose $y_i\in Y$ such that $K_\lambda x\in B_{\frac{\varepsilon}{3C}}(y_i)$ and obtain

\begin{align*}
\|S_kK_\lambda x-SK_\lambda x\|\leq\|S_k\|\|K_\lambda x-y_i\|+\|S_ky_i-Sy_i\|+\|S\|\|y_i-K_\lambda x\|\leq \frac{2\varepsilon}{3}+\|S_ky_i-Sy_i\|.
\end{align*}  
Hence if we choose $k_0$ so large that $\max_{i=1,\ldots,N}\{\|S_ky_i-Sy_i\|\}<\frac{\varepsilon}{3}$ for all $k\geq k_0$, then we obtain

\[\sup_{\lambda\in\Lambda}\sup_{\|x\|=1}\|S_kK_\lambda x-SK_\lambda x\|<\varepsilon,\quad\text{for}\,\, k\geq k_0.\]
\end{proof}
\noindent
Finally, let us consider a separable Hilbert space $H$ with scalar product $\langle\cdot,\cdot\rangle$ and let us assume that $\{e_k\}_{k\in\mathbb{N}}$ is an orthonormal basis of $H$. We obtain a sequence of orthogonal projections by setting 

\[P_nu=\sum^n_{k=1}{\langle u,e_k\rangle e_k},\quad n\in\mathbb{N},\]
which converges strongly to the identity operator $I_H$. As a consequence, if $K=\{K_\lambda\}_{\lambda\in\Lambda}$ is a family of compact operators in $\mathcal{L}(H)$ parametrised by the compact metric space $\Lambda$, then

\[\sup_{\lambda\in\Lambda}\|P_nK_\lambda-K_\lambda\|\rightarrow 0,\quad n\rightarrow\infty.\]
 

\thebibliography{9999999}

\bibitem[APS76]{AtiyahPatodi} M.F. Atiyah, V.K. Patodi, I.M. Singer, \textbf{Spectral Asymmetry and Riemannian Geometry III}, Proc. Cambridge Philos. Soc. \textbf{79}, 1976, 71--99

\bibitem[BLP05]{UnbSpecFlow} B. Boo{\ss}-Bavnbek, M. Lesch, J. Phillips, \textbf{Unbounded Fredholm Operators and Spectral Flow}, Canad. J. Math. \textbf{57}, 2005, 225--250

\bibitem[CFP00]{SFLUniqueness} E. Ciriza, P.M. Fitzpatrick, J. Pejsachowicz, \textbf{Uniqueness of Spectral Flow}, Math. Comput. Modelling \textbf{32}, 2000, 1495--1501 


\bibitem[FPR99]{SFLPejsachowicz} P.M. Fitzpatrick, J. Pejsachowicz, L. Recht, \textbf{Spectral Flow and Bifurcation of Critical Points of Strongly-Indefinite Functionals Part I: General Theory},  J. Funct. Anal. \textbf{162}, 1999, 52--95

\bibitem[FPR00]{SFLPejsachowiczII} P.M. Fitzpatrick, J. Pejsachowicz, L. Recht, \textbf{Spectral Flow and Bifurcation of Critical Points of Strongly-Indefinite Functionals Part II: Bifurcation of Periodic Orbits of Hamiltonian Systems}, J. Differential Equations \textbf{163}, 2000, 18--40

\bibitem[KS97]{Kryszewski} W. Kryszewski, A. Szulkin, \textbf{An infinite dimensional Morse theory with applications}, Trans. Amer. Math. Soc. \textbf{349}, 1997, 3181--3234

\bibitem[LL89]{LiLiu} S. Li, J.Q. Liu, \textbf{Morse theory and asymptotic linear Hamiltonian systems}, J. Differential Equations \textbf{78}, 1989, 53--73

\bibitem[PeW13]{BifJac} J. Pejsachowicz, N. Waterstraat, \textbf{Bifurcation of critical points for continuous families of $C^2$ functionals of Fredholm type}, J. Fixed Point Theory Appl. \textbf{13},  2013, 537--560, arXiv:1307.1043 [math.FA]

\bibitem[PoW13]{AleIchDomain} A. Portaluri, N. Waterstraat, \textbf{On bifurcation for semilinear elliptic Dirichlet problems and the Morse-Smale index theorem}, J. Math. Anal. Appl. \textbf{408}, 2013, 572--575, arXiv:1301.1458 [math.AP]

\bibitem[PoW14]{AleBall} A. Portaluri, N. Waterstraat, \textbf{On bifurcation for semilinear elliptic Dirichlet problems on geodesic balls}, J. Math. Anal. Appl. \textbf{415}, 2014, 240--246, arXiv:1305.3078 [math.AP]

\bibitem[PoW15]{AleSmaleIndef} A. Portaluri, N. Waterstraat, \textbf{A Morse-Smale index theorem for indefinite elliptic systems and bifurcation}, J. Differential Equations \textbf{258}, 2015, 1715--1748, arXiv:1408.1419 [math.AP]

\bibitem[Ra86]{Rabinowitz} P.H. Rabinowitz, \textbf{Minimax Methods in Critical Point Theory with Applications to Differential Equations}, Conf. Board Math. Sci. \textbf{65}, 1986

\bibitem[RS95]{Robbin-Salamon} J. Robbin, D. Salamon, \textbf{The spectral flow and the {M}aslov index}, Bull. London Math. Soc. {\bf 27}, 1995, 1--33

\bibitem[Sz94]{Szulkin} A. Szulkin, \textbf{Bifurcation for strongly indefinite functionals and a Liapunov type theorem for Hamiltonian systems}, J. Differential Integral Equations \textbf{7}, 1994, 217--234

\bibitem[Wa15a]{ProcHan} N. Waterstraat, \textbf{On bifurcation for semilinear elliptic Dirichlet problems on shrinking domains}, Springer Proc. Math. Stat. \textbf{119}, 2015, 273--291, arXiv:1403.4151 [math.AP] 

\bibitem[Wa15b]{CalcVar} N. Waterstraat, \textbf{A family index theorem for periodic Hamiltonian systems and bifurcation}, Calc. Var. Partial Differential Equations \textbf{52}, 2015, 727--753, arXiv:1305.5679 [math.DG]

\bibitem[Wa15c]{Homoclinics} N. Waterstraat, \textbf{Spectral flow, crossing forms and homoclinics of Hamiltonian systems}, Proc. Lond. Math. Soc. (3) \textbf{111}, 2015, 275--304, arXiv:1406.3760 [math.DS]

\vspace{1cm}
Nils Waterstraat\\
School of Mathematics,\\
Statistics \& Actuarial Science\\
University of Kent\\
Canterbury\\
Kent CT2 7NF\\
UNITED KINGDOM\\
E-mail: n.waterstraat@kent.ac.uk

\end{document}